\documentclass{article}
\usepackage{amsmath,amssymb,amsthm}

\usepackage[colorlinks=true,linkcolor=blue,citecolor=blue,pdfpagelabels=false]{hyperref}

\theoremstyle{plain}
  \newtheorem{theorem}{Theorem}[section]

  \newtheorem{proposition}[theorem]{Proposition}
  
\theoremstyle{definition}
  
  \newtheorem{remark}[theorem]{Remark}

%%%%%%%%%% Start TeXmacs macros
\newcommand{\email}[1]{{\textit{Email:} \texttt{#1}}}
\newcommand{\nocomma}{}
\newcommand{\noplus}{}
%%%%%%%%%% End TeXmacs macros

\newcommand{\op}[1]{\ensuremath{\operatorname{#1}}}

% relations
\newcommand{\assign}{:=}

% shortcuts for integration

\newcommand{\md}{\mathrm{d}}

% constants and other symbols

\newcommand{\ZZ}{\mathbb{Z}}

% special functions

\newcommand{\pFq}[5]{\ensuremath{{}_{#1}F_{#2} \left( \genfrac{}{}{0pt}{}{#3}{#4} \bigg| {#5} \right)}}

% Mellin transform

\begin{document}

\title{A solution of Sun's $\$520$ challenge concerning $\frac{520}{\pi}$}
\author{Mathew Rogers\thanks{\email{mathewrogers@gmail.com}}\\
Department of Mathematics and Statistics\\
Universit\'e de Montr\'eal\and Armin
Straub\thanks{\email{astraub@illinois.edu}}\\
Department of Mathematics\\
University of Illinois at Urbana-Champaign}\maketitle

\begin{abstract}
  We prove a Ramanujan-type formula for $520 / \pi$ conjectured by Sun. Our
  proof begins with a hypergeometric representation of the relevant double
  series, which relies on a recent generating function for Legendre
  polynomials by Wan and Zudilin. After showing that appropriate modular
  parameters can be introduced, we then apply standard techniques, going back
  to Ramanujan, for establishing series for $1 / \pi$.
\end{abstract}

\section{Introduction}

In \cite{sun-520}, Zhi-Wei Sun offered a $\$520$ prize for the first correct
proof of the following Ramanujan-type formula:
\begin{equation}
  \frac{520}{\pi} = \sum_{n = 0}^{\infty} \frac{1054 n + 233}{480^n}  \binom{2
  n}{n} \sum_{k = 0}^n \binom{n}{k}^2 \binom{2 k}{n} (- 1)^k 8^{2 k - n} .
  \label{eq:sunconj}
\end{equation}
The prize money attached to formula (\ref{eq:sunconj}) is in honor of the May
20th celebrations at Nanjing University. In this paper, we offer a complete
proof of (\ref{eq:sunconj}).

Our proof may be divided into three parts. First and crucially, in Section
\ref{sec:hyp} we connect series of the form (\ref{eq:sunconj}) to a product of
hypergeometric functions at algebraically related arguments. This first step
relies on the recent work {\cite{wz-legendre}} of Wan and Zudilin. Secondly, we
show in Section \ref{sec:modular} that the two algebraic arguments are values
of modular functions at simply related quadratic irrationalities. With such a
setup, there exist rather standard procedures to prove the corresponding series
for $1 / \pi$ (though we point out that series of the form
\eqref{eq:sunconjFGab} have only been considered since \cite{cwz-legendre}). To
varying degrees of detail, the approach employed here is, for instance, used in
{\cite{berndt-piseries2010}}, {\cite{cwz-legendre}} or {\cite{wz-legendre}}.
For the completeness of this note and for the convenience of the reader, we
chose to nevertheless include all necessary details in Section \ref{sec:exec}.
For a historical account and many more references on series for $1 / \pi$,
originating with {\cite{ramanujan1}}, we refer to {\cite{berndt-piseries2010}}.

Sun's list of identities {\cite{sun-520}} contains many formulas along the
lines of \eqref{eq:sunconj}, which were discovered by computational searches guided by the
observation that most series for $1/\pi$ like \eqref{eq:sunconj} appear to
occur in tandem with congruences for their finite sum analogs. This is
illustrated in \cite{sun-135c} where \eqref{eq:sunconj} appears as part of
Conjecture 7.11.  The final Section \ref{sec:other} briefly discusses the applicability of our
approach to proving all the identities in {\cite{sun-520,sun-135c}} of the same form as
(\ref{eq:sunconj}).

\section{Step I: A hypergeometric representation}\label{sec:hyp}

Since virtually every interesting formula for $1 / \pi$ comes from studying
hypergeometric functions, we expect that the same should be true for
(\ref{eq:sunconj}). Let us define
\begin{equation}
  A (x, y) = \sum_{n = 0}^{\infty} x^n  \binom{2 n}{n} \sum_{k = 0}^n
  \binom{n}{k}^2 \binom{2 k}{n} (- 1)^k y^{2 k - n} . \label{eq:Fdef}
\end{equation}
Clearly then Sun's conjecture (\ref{eq:sunconj}) is equivalent to
\begin{equation}
  233 A \left( \frac{1}{480}, 8 \right) + 1054 \left( \theta_x A \right)
  \left( \frac{1}{480}, 8 \right) = \frac{520}{\pi} \label{eq:sunconjA}
\end{equation}
where $\theta_x = x \frac{\md}{\md x}$ is the Euler differential
operator. Our first step is to reexpress $A (x, y)$ as a series of
Legendre polynomials. 
Recent work of Wan and Zudilin \cite{wz-legendre} then makes it possible to
write $A(x,y)$ as a product of hypergeometric functions.

\begin{proposition}\label{prop:AxyP}
  Let $x$ and $y$ be such that (\ref{eq:Fdef}) converges absolutely. Then
  \begin{equation}
    A (x,y) = \sum_{k = 0}^{\infty} \left( - x y \right)^k 
    \binom{2 k}{k}^2 P_{2 k} \left( \sqrt{1 + \frac{4 x}{y}} \right) .
    \label{eq:AxyP}
  \end{equation}
\end{proposition}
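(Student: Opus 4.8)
The plan is to prove \eqref{eq:AxyP} by a direct rearrangement of the double series, followed by the recognition of the resulting inner sum as a terminating Gauss hypergeometric series, which is then matched to an even Legendre polynomial. Since $x,y$ are assumed to make \eqref{eq:Fdef} absolutely convergent, I may freely interchange the two summations. In the term indexed by $(n,k)$ the factor $\binom{n}{k}^2\binom{2k}{n}$ vanishes unless $k\le n\le 2k$, so after swapping the order of summation the inner index runs over this finite range. Writing $n=k+m$ with $0\le m\le k$ and extracting the common factor $(-xy)^k$, I expect to arrive at
\begin{equation*}
  A(x,y)=\sum_{k=0}^\infty (-xy)^k\sum_{m=0}^k \left(\frac{x}{y}\right)^m \binom{k+m}{m}^2\binom{2k}{k-m}\binom{2k+2m}{k+m},
\end{equation*}
where I have used $\binom{2k}{k+m}=\binom{2k}{k-m}$.

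Set $t=x/y$ and let $S_k(t)$ denote the inner sum, with $a_m$ its coefficient of $t^m$. The key computation is to form the ratio of consecutive summands; a short calculation should give
\begin{equation*}
  \frac{a_{m+1}}{a_m}=\frac{2(k-m)(2k+2m+1)}{(m+1)^2}.
\end{equation*}
Since $a_0=\binom{2k}{k}^2$, rewriting this ratio in standard form (with $2(2k+2m+1)=4(k+m+\tfrac12)$ and a sign absorbed into the argument) identifies $S_k$ as a terminating series, namely $S_k(t)=\binom{2k}{k}^2\,{}_2F_1(-k,k+\tfrac12;1;-4t)$; the negative integer first parameter $-k$ forces termination at $m=k$, matching the finite range.

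It then remains to identify this hypergeometric value as a Legendre polynomial. Starting from the classical Jacobi representation $P_{2k}(z)={}_2F_1(-2k,2k+1;1;(1-z)/2)$ and applying the quadratic transformation ${}_2F_1(a,b;a+b+\tfrac12;4u(1-u))={}_2F_1(2a,2b;a+b+\tfrac12;u)$ with $a=-k$, $b=k+\tfrac12$ and $u=(1-z)/2$ (so that $a+b+\tfrac12=1$ and $4u(1-u)=1-z^2$), I obtain the representation $P_{2k}(z)={}_2F_1(-k,k+\tfrac12;1;1-z^2)$. Specializing to $z=\sqrt{1+4t}$ gives $1-z^2=-4t$, whence $S_k(t)=\binom{2k}{k}^2 P_{2k}\bigl(\sqrt{1+4t}\bigr)$. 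Substituting $t=x/y$ and reinstating the factor $(-xy)^k$ then yields \eqref{eq:AxyP}.

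The routine parts are the term-ratio computation and the bookkeeping of the reindexing; the one step requiring genuine care is the passage from the finite sum to the even Legendre polynomial. The obstacle there is to pin down the correct quadratic transformation, aligning the parameters $a=-k$, $b=k+\tfrac12$, $c=1$ so that the argument collapses to exactly $1-z^2$, rather than landing on one of the many nearby but inequivalent hypergeometric forms of $P_{2k}$ (for instance the $z^2$-argument version with lower parameter $\tfrac12$), which would not specialize cleanly to the square-root argument appearing in \eqref{eq:AxyP}.
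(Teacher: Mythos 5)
Your proposal is correct and follows essentially the same route as the paper: interchange the summations, reindex to recognize the inner sum as the terminating series $\binom{2k}{k}^2\,{}_2F_1(-k,k+\tfrac12;1;-4x/y)$, and apply the quadratic transformation ${}_2F_1(a,b;a+b+\tfrac12;4u(1-u))={}_2F_1(2a,2b;a+b+\tfrac12;u)$ (the paper cites this as (3.1.3) of Andrews--Askey--Roy) to identify that sum with $P_{2k}\bigl(\sqrt{1+4x/y}\bigr)$. Your term-ratio computation and parameter choices check out, so the argument is sound.
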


\begin{proof}
  If (\ref{eq:Fdef}) converges absolutely, then we can interchange the order
  of summation to obtain
  \begin{align*}
    A (x,y) & = \sum_{n = 0}^{\infty} x^n  \binom{2 n}{n}
    \sum_{k = 0}^n \binom{n}{k}^2 \binom{2 k}{n} (- 1)^k y^{2 k - n}\\
    & = \sum_{k = 0}^{\infty} \sum_{n = k}^{\infty} x^n  \frac{\left( 2 n
    \right) !}{k!^2 \left( n - k \right) !^2}  \frac{\left( 2 k \right)
    !}{\left( 2 k - n \right) !n!} (- 1)^k y^{2 k - n}\\
    & = \sum_{k = 0}^{\infty} \sum_{n = 0}^{\infty} x^{n + k}  \frac{\left(
    2 n + 2 k \right) !}{k!^2 n!^2}  \frac{\left( 2 k \right) !}{\left( k - n
    \right) ! \left( n + k \right) !} (- 1)^k y^{k - n}\\
    & = \sum_{k = 0}^{\infty} \left( - x y \right)^k  \binom{2 k}{k}^2
    \pFq21{1/2+k,-k}{1}{-\frac{4x}{y}} .
  \end{align*}
  By the quadratic transformation {\cite[(3.1.3)]{aar}} for the hypergeometric
  function, we have
  \begin{align*}
    \pFq21{1/2+k,-k}{1}{-\frac{4x}{y}}
    &= \pFq21{1+2k,-2k}{1}{\frac{1-\sqrt{1+\frac{4x}{y}}}{2}} \\
    & = P_{2 k} \left( \sqrt{1 + \frac{4 x}{y}} \right)
  \end{align*}
  where $P_k (z)$ is the $k$th Legendre polynomial.
  This is (\ref{eq:AxyP}).
\end{proof}

The proof of Proposition \ref{prop:AxyP} actually provides a convenient
criterion for absolute convergence of the original double series
\eqref{eq:Fdef}.

\begin{proposition}\label{prop:conv}
  The double series \eqref{eq:Fdef} converges absolutely for $x, y$ if
  \begin{equation}\label{eq:convratio}
    \left| 16 x y \left( \sqrt{\left| \frac{4 x}{y} \right|} + \sqrt{1 +
    \left| \frac{4 x}{y} \right|} \right)^2 \right| < 1.
  \end{equation}
  % In particular, (\ref{eq:Fdef}) converges absolutely for $(x,y) =
  % \left( \frac{1}{480}, 8 \right)$.
\end{proposition}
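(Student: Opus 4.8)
The plan is to run the computation in the proof of Proposition \ref{prop:AxyP} verbatim, but with every summand replaced by its absolute value, and then to dominate the resulting series of Legendre polynomials by a geometric series. Since the terms of \eqref{eq:Fdef} carry only the sign $(-1)^k$ and are otherwise products of (nonnegative) binomial coefficients and powers of $x$ and $y$, absolute convergence of \eqref{eq:Fdef} is equivalent to the finiteness of
\[
\Sigma := \sum_{n=0}^\infty \sum_{k=0}^n |x|^n \binom{2n}{n}\binom{n}{k}^2\binom{2k}{n}|y|^{2k-n}.
\]
All summands here are nonnegative, so Tonelli's theorem lets me interchange and reindex the two sums exactly as in the proof of Proposition \ref{prop:AxyP}, with no convergence caveat; the only changes are that the outer factor $(-xy)^k$ becomes $|xy|^k$ and the inner terminating series is replaced by the sum of the absolute values of its terms.

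First I would carry out this reindexing to obtain $\Sigma = \sum_{k=0}^\infty |xy|^k\binom{2k}{k}^2 I_k$, where $I_k$ is the sum of the absolute values of the terms of $\pFq21{1/2+k,-k}{1}{-4x/y}$. The key elementary observation is that, for $0\le n\le k$, the Pochhammer symbol $(-k)_n$ has sign $(-1)^n$, so that $|(-k)_n| = (-1)^n(-k)_n$; this extra factor of $(-1)^n$ combines with $|4x/y|^n$ to reflect the argument of the hypergeometric, giving
\[
I_k = \pFq21{1/2+k,-k}{1}{-\left|\tfrac{4x}{y}\right|}.
\]
Applying the same quadratic transformation \cite[(3.1.3)]{aar} used in Proposition \ref{prop:AxyP} then yields $I_k = P_{2k}\bigl(\sqrt{1+|4x/y|}\,\bigr)$, so that $\Sigma = \sum_{k=0}^\infty |xy|^k \binom{2k}{k}^2 P_{2k}\bigl(\sqrt{1+|4x/y|}\,\bigr)$ as an honest identity in $[0,\infty]$.

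It remains to bound this last series. Here I would invoke Laplace's integral representation $P_m(z) = \frac1\pi\int_0^\pi\bigl(z+\sqrt{z^2-1}\cos\theta\bigr)^m\id\theta$, valid for $z\ge 1$, which immediately gives the clean bound $P_m(z) \le (z+\sqrt{z^2-1})^m$. With $z = \sqrt{1+|4x/y|}\ge 1$ one has $\sqrt{z^2-1} = \sqrt{|4x/y|}$, whence $P_{2k}(z) \le \bigl(\sqrt{|4x/y|}+\sqrt{1+|4x/y|}\bigr)^{2k}$. Combining this with $\binom{2k}{k}^2 \le 16^k$ dominates $\Sigma$ by the geometric series $\sum_{k\ge 0}\bigl[16|xy|\bigl(\sqrt{|4x/y|}+\sqrt{1+|4x/y|}\bigr)^2\bigr]^k$, whose ratio is exactly the quantity appearing in \eqref{eq:convratio}; hence $\Sigma<\infty$ under that hypothesis.

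I expect the only genuine subtlety to be the sign bookkeeping in the second paragraph, namely verifying that passing to absolute values inside the terminating hypergeometric reflects its argument to $-|4x/y|$ rather than to $|4x/y|$, so that the quadratic transformation still applies and produces a Legendre polynomial evaluated at a point $\ge 1$ where the elementary bound is available. Everything downstream — the Laplace bound and the geometric comparison — is routine, and the precise expression in \eqref{eq:convratio} drops out automatically once $\binom{2k}{k}\le 4^k$ is used.
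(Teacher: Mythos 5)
Your proof is correct. Its first half coincides in substance with the paper's: both reduce absolute convergence of \eqref{eq:Fdef} to convergence of the nonnegative series $\sum_k |xy|^k\binom{2k}{k}^2 P_{2k}\bigl(\sqrt{1+|4x/y|}\bigr)$. The paper gets there by the small trick of observing that the term-by-term absolute value of $A(x,y)$ is exactly $A(i|x|,i|y|)$, so Proposition \ref{prop:AxyP} applies verbatim; you instead redo the interchange with absolute values via Tonelli and track the sign of $(-k)_n$ by hand --- same outcome, and your sign bookkeeping (reflecting the argument to $-|4x/y|$, so the quadratic transformation still yields a Legendre polynomial at a point $\ge 1$) is right. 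The genuine divergence is in the final step: the paper invokes Szeg\H{o}'s asymptotic formula \eqref{eq:Pasy} for $P_n(z)$ off $[-1,1]$ together with the standard asymptotics for $\binom{2k}{k}$, i.e.\ a ratio/root-test argument, whereas you use the Laplace integral representation to obtain the exact inequality $P_{2k}(z)\le\bigl(z+\sqrt{z^2-1}\bigr)^{2k}$ and $\binom{2k}{k}\le 4^k$, producing an honest geometric majorant whose ratio is literally the quantity in \eqref{eq:convratio}. Your route is more elementary, gives a uniform rather than asymptotic bound, and handles the boundary case $x=0$ (where $z=1$ and the asymptotic formula does not formally apply) without comment; the paper's asymptotic route has the compensating advantage of showing that the majorant is essentially sharp, which is what justifies its later remark that the convergence at $(x,y)=(\tfrac{1}{480},8)$ is geometric with ratio exactly $-\tfrac{64}{225}$ rather than merely at most that in modulus.
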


\begin{proof}
  Absolute convergence of $A(x,y)$, as in \eqref{eq:Fdef}, is equivalent to
  convergence of
  \begin{equation*}
    A (i |x|, i |y|) = \sum_{n = 0}^{\infty} |x|^n \binom{2 n}{n} \sum_{k = 0}^n
    \binom{n}{k}^2 \binom{2 k}{n} |y|^{2 k - n} .
  \end{equation*}
  The fact that all summands are positive again justifies interchanging the order
  of summation as in the proof of Proposition \ref{prop:AxyP} to obtain
  \begin{equation*}
    A (i |x|, i |y|) =
    \sum_{k = 0}^{\infty} |x y|^k 
    \binom{2 k}{k}^2 P_{2 k} \left( \sqrt{1 + \frac{4 |x|}{|y|}} \right) .
  \end{equation*}
  The classical asymptotic formula for the Legendre
  polynomials, e.g. {\cite[(8.21.1)]{szego3}}, states that
  \begin{equation}\label{eq:Pasy}
    P_n (z) \cong \frac{\left( z + \left( z^2 - 1 \right)^{1 /
    2} \right)^{n + 1 / 2}}{\left( 2 \pi n \right)^{1 / 2} \left( z^2 - 1
    \right)^{1 / 4}},
  \end{equation}
  as $n \rightarrow \infty$ and is valid for $z$ outside the interval $\left[ -
  1, 1 \right]$. Here, in the cases of interest, $z>1$ and combining
  \eqref{eq:Pasy} with the well-known asymptotics for the central binomial
  coefficients shows \eqref{eq:convratio}.
\end{proof}

In particular, the series (\ref{eq:Fdef}) converges absolutely for $(x,y) =
\left( \frac{1}{480}, 8 \right)$.
Moreover, we find that in this case the convergence of \eqref{eq:AxyP} is
geometric with ratio $-\frac{64}{225} = -\left( \frac{8}{15} \right)^2$.

\begin{theorem}
  \label{thm:Ahyp}
  Let $x$ and $y$ be such that \eqref{eq:AxyP} holds.
  Suppose, further, there are $X$ and $Y$ in a certain neighborhood of $1$
  such that
  \begin{equation}
    - x y = \left( \frac{X - Y}{4 \left( 1 \noplus + X Y \right)} \right)^2,
    \hspace{2em} 1 + \frac{4 x}{y} = \left[ \frac{\left( X + Y \right) \left(
    1 - X Y \right)}{\left( X - Y \right) \left( 1 + X Y \right)} \right]^2 .
    \label{eq:relXYxy}
  \end{equation}
  Then
  \begin{equation}\label{eq:Ahyp}
    A (x, y) = \frac{1 + X Y}{2} \pFq21{1/2,1/2}{1}{1-X^2} \pFq21{1/2,1/2}{1}{1-Y^2}.
  \end{equation}
\end{theorem}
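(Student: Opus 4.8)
The plan is to feed the Legendre-series representation (\ref{eq:AxyP}) into the Brafman-type generating function of Wan and Zudilin \cite{wz-legendre}, which evaluates sums of exactly the shape $\sum_{k \ge 0}\binom{2k}{k}^2 P_{2k}(u)\,z^k$ as a product of two $\pFq21{1/2,1/2}{1}{\cdot}$ factors. Writing $z = -xy$ and $u = \sqrt{1+4x/y}$, Proposition \ref{prop:AxyP} already puts $A(x,y)$ in precisely this form, so the Wan--Zudilin identity returns
\[
  A(x,y) = C(u,z)\,\pFq21{1/2,1/2}{1}{\tfrac{1-\rho-\sigma}{2}}\pFq21{1/2,1/2}{1}{\tfrac{1-\rho+\sigma}{2}},
\]
where the prefactor $C$, the ``Brafman radical'' $\rho$ (a square root of a quadratic in $z$ whose coefficients depend on $u^2 = 1+4x/y$), and the splitting term $\sigma$ are explicit algebraic functions of $u$ and $z$. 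The whole content of the theorem is then that, in the variables $X,Y$ of (\ref{eq:relXYxy}), these three quantities collapse to $\rho = X^2 + Y^2 - 1$, $\sigma = X^2 - Y^2$, and $C = \tfrac{1+XY}{2}$; for then $\tfrac{1-\rho\mp\sigma}{2}$ becomes $1-X^2$ and $1-Y^2$ and (\ref{eq:Ahyp}) follows at once.

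The substitution (\ref{eq:relXYxy}) is engineered precisely so that the symmetric data $z = -xy$ and $u^2 = 1+4x/y$ become rational in $X,Y$, namely $\sqrt{-xy} = \frac{X-Y}{4(1+XY)}$ and $u = \frac{(X+Y)(1-XY)}{(X-Y)(1+XY)}$ once the signs are fixed as below. I would substitute these into $\rho$, $\sigma$ and $C$ and simplify as rational functions of $X$ and $Y$. The decisive step is the radical $\rho$: under the parametrization the quadratic under the square root must turn into a perfect square in $X,Y$, which is what allows the nested radicals to be removed and the two hypergeometric arguments to factor cleanly as $1-X^2$ and $1-Y^2$. I expect this to be the main obstacle --- not because any individual manipulation is deep, but because one must simultaneously track the correct branches of the several square roots ($\sqrt{-xy}$, the radical defining $u$, and $\rho$), and it is the specific numerators and denominators in (\ref{eq:relXYxy}) that conspire to make the perfect-square collapse happen.

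Finally, restricting $X,Y$ to a neighborhood of $1$ does the sign and convergence bookkeeping for us. Evaluating at a convenient real point of the neighborhood fixes consistent branches for all the radicals above, and continuity then propagates this choice across the neighborhood, removing the sign ambiguities left open in the previous step. Moreover, for $X,Y$ close to $1$ the arguments $1-X^2$ and $1-Y^2$ are small, so both $\pFq21{1/2,1/2}{1}{\cdot}$ factors sit well inside their discs of convergence, while the hypothesis that (\ref{eq:AxyP}) holds supplies convergence of the left-hand side (equivalently, of the Wan--Zudilin sum). Since both sides of (\ref{eq:Ahyp}) are then analytic in $(X,Y)$ on this neighborhood and agree as the algebraic simplification of the Wan--Zudilin output, the identity holds throughout by analytic continuation. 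The only genuine input beyond this bookkeeping is the Wan--Zudilin generating function itself; everything else is the rational change of variables (\ref{eq:relXYxy}) doing its work.
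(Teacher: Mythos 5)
Your proposal is correct and takes essentially the same route as the paper: both reduce the theorem to the Wan--Zudilin generating function for $\sum_{k\ge 0}\binom{2k}{k}^2 P_{2k}(u)\,z^k$ applied to the Legendre-series representation (\ref{eq:AxyP}). The only difference is that the paper quotes the Wan--Zudilin identity already parametrized in the variables $X,Y$ (with Legendre argument $\frac{(X+Y)(1-XY)}{(X-Y)(1+XY)}$ and expansion variable $\bigl(\frac{X-Y}{4(1+XY)}\bigr)^2$), so the relations (\ref{eq:relXYxy}) amount to a direct matching of arguments and the radical-collapsing computation you anticipate as the main obstacle is not actually needed.
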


\begin{proof}
  A series of the form \eqref{eq:AxyP} was recently summed by Wan
  and Zudilin {\cite{wz-legendre}}. They proved that when $X$ and $Y$ lie in a
  certain neighborhood of $1$, then
  \begin{align*}
    &  \sum_{k = 0}^{\infty} \left( \frac{X - Y}{4 \left( 1 \noplus + X Y
    \right)} \right)^{2 k}  \binom{2 k}{k}^2 P_{2 k} \left( \frac{\left( X + Y
    \right) \left( 1 - X Y \right)}{\left( X - Y \right) \left( 1 + X Y
    \right)} \right)\\
    & = \frac{1 + X Y}{2} \pFq21{1/2,1/2}{1}{1-X^2} \pFq21{1/2,1/2}{1}{1-Y^2}.
  \end{align*}
  If $X$ and $Y$ are chosen as in (\ref{eq:relXYxy}), this is equal to $A
  (x,y)$ as claimed.
\end{proof}

\begin{remark}
  \label{rk:XYsym}The solutions to the equations (\ref{eq:relXYxy}) have a
  remarkable number of symmetries. Namely, if $(X,Y)$ is a
  solution, then so are $\left( - X, - Y \right)$, $\left( X^{- 1}, Y^{- 1}
  \right)$, and $\left( Y, X \right)$. Assuming that $x$ and $y$ are real, we
  also have $\left( \bar{X}, \bar{Y} \right)$ as another solution. Less
  obviously, if $(X,Y)$ is a solution, then so is
  \begin{equation}
    \left( \frac{1 - X}{1 + X}, \frac{1 - Y}{1 + Y} \right) .
    \label{eq:XYtwist}
  \end{equation}
\end{remark}

\section{Notation}\label{sec:notation}

Before proceeding with the proof of Sun's identity, we introduce some very
standard objects from elliptic modular function theory. Unfortunately,
notation is not as standard.

Throughout, $q = e^{2 \pi i \tau}$ and $\tau$ is assumed to lie in the upper
half-plane. We denote $D = \frac{1}{2 \pi i}  \frac{\md}{\md \tau} = q
\frac{\md}{\md q}$. The Dedekind eta function is
\[ \eta (\tau) = q^{1 / 24} \prod_{n \geqslant 1} (1 - q^n) \]
and its logarithmic derivative determines the Eisenstein series
\[ E_2 (\tau) = 24 \frac{D \eta}{\eta} = 1 + 24 \sum_{n \geqslant
   1} \frac{n q^n}{1 - q^n} . \]
The standard Jacobi theta functions are
\[ \theta_2 (\tau) = \sum_{n = - \infty}^{\infty} q^{\left( n + 1
   / 2 \right)^2 / 2} \nocomma, \hspace{1em} \theta_3 (\tau) =
   \sum_{n = - \infty}^{\infty} q^{n^2 / 2}, \hspace{1em} \theta_4 (\tau) = \sum_{n = - \infty}^{\infty} \left( - 1 \right)^n q^{n^2 / 2} .
\]
They can be used to express the elliptic modulus $k (\tau)$ as
well as the complementary modulus $k' (\tau)$ as
\[ k (\tau) = \left( \frac{\theta_2 (\tau)}{\theta_3
   (\tau)} \right)^2, \hspace{2em} k' (\tau) =
   \left( \frac{\theta_4 (\tau)}{\theta_3 (\tau)}
   \right)^2 . \]
We note the classical $k^2 + \left( k' \right)^2 = 1$ as well as the
transformations
\begin{equation}
  k (\tau+1) = i \frac{k (\tau)}{k' (\tau)}, \hspace{1em}
  k' (\tau+1) = \frac{1}{k' (\tau)}, \hspace{1em}
  k \left( - \frac{1}{\tau} \right) = k' (\tau) . \label{eq:ktr}
\end{equation}
Finally, the complete elliptic integral of the first kind is
\begin{equation*}
  K(k) = \frac\pi2 \pFq21{1/2,1/2}{1}{k^2}
\end{equation*}
With this setup we have the classical identity
\begin{equation}\label{eq:Kktheta}
  \frac{2}{\pi} K \left( k (\tau) \right)
  = \pFq21{1/2,1/2}{1}{k^2(\tau)}
  = \theta_3 ( \tau )^2,
\end{equation}
valid in a neighborhood of $i \infty$, which in light of Theorem
\ref{thm:Ahyp} underlies all that follows.

\section{Step II: Introducing modular parameters}\label{sec:modular}

\begin{proposition}
  \label{prop:tau}If $x = \frac{1}{480}$ and $y = 8$, then
  \[ X = k' (\tau_0), \hspace{2em} Y = k' (5\tau_0), \]
  with $\tau_0 = \frac{1}{2} + \frac{3}{10} \sqrt{- 5}$, satisfy the two
  relations (\ref{eq:relXYxy}).
\end{proposition}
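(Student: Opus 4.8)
The plan is to reduce the statement to two numerical identities and then to evaluate the relevant complementary moduli at the given CM points. Substituting $x = \frac{1}{480}$ and $y = 8$ into the left-hand sides of \eqref{eq:relXYxy} produces the rational targets
\begin{equation*}
  - x y = - \frac{1}{60}, \qquad 1 + \frac{4 x}{y} = \frac{961}{960} = \left( \frac{31}{8 \sqrt{15}} \right)^2 ,
\end{equation*}
where the factorization $961 = 31^2$ is consistent with the geometric ratio $-\left(\frac{8}{15}\right)^2$ recorded after Proposition \ref{prop:conv}. So the task is to show that the two specific complex numbers $X = k'(\tau_0)$ and $Y = k'(5\tau_0)$ satisfy these two equations.

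Before evaluating anything, I would record the transformation behaviour of the pair $(X,Y)$ using \eqref{eq:ktr}. Since $k'(\tau+1) = 1/k'(\tau)$ forces $k'(\tau+2) = k'(\tau)$, the substitution $\tau \mapsto \tau+1$ sends $(X,Y) = (k'(\tau),k'(5\tau))$ to $(X^{-1},Y^{-1})$, which is exactly one of the symmetries listed in Remark \ref{rk:XYsym}. This both reconfirms that $-xy$ and $1 + \frac{4x}{y}$ are invariant under the expected group and, more usefully, shows that $X$ and $Y$ need only be pinned down up to the symmetries of Remark \ref{rk:XYsym}. That is the bookkeeping device I would use to fix the several branches of the square roots, which matters because $X$ and $Y$ are genuinely complex here; I would match the chosen branch to the geometric one by continuity from the neighbourhood of $1$ appearing in the hypothesis of Theorem \ref{thm:Ahyp}.

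The analytic core is the evaluation of the singular values. Both $\tau_0$ and $5\tau_0$ are CM points of discriminant $-180$ (indeed $\tau_0$ is a root of $10\tau^2 - 10\tau + 7$), so $X^2 = k'^2(\tau_0)$ and $Y^2 = k'^2(5\tau_0)$ are singular moduli and hence explicit algebraic numbers. To locate them I would combine two ingredients: the classical modular equation of degree $5$ in Ramanujan's or Legendre's form (e.g.\ as in \cite{berndt-piseries2010}), which relates $k(\tau)$ to $k(5\tau)$ and so gives one symmetric polynomial relation between $X$ and $Y$; and the explicit singular modulus of discriminant $-180$, obtained from classical tables or by evaluating $k'$ from its $q$-expansion at $\tau_0$ and $5\tau_0$ to high precision and then certifying the outcome using the bounded degree of the algebraic numbers together with the modular equation. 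Substituting the resulting $X$ and $Y$ into \eqref{eq:relXYxy} and simplifying should then return exactly $-\frac{1}{60}$ and $\frac{961}{960}$.

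I expect the main obstacle to be precisely this singular-value evaluation. Discriminant $-180$ has class number $4$, so $k'^2(\tau_0)$ lies in a quartic extension, and $\tau_0$ and $5\tau_0$ fall in different form classes (one checks $\tau_0 \in [(7,4,7)]$ and $5\tau_0 \in [(2,2,23)]$). Thus, while $X$ and $Y$ are linked by the degree-$5$ modular equation, they are \emph{not} related by a single trivial algebraic symmetry, and the rationality of $-xy$ and $1 + \frac{4x}{y}$ reflects a genuine cancellation among conjugate singular moduli. Carrying out that elimination cleanly, while tracking the correct complex branches via Remark \ref{rk:XYsym}, is the delicate step; once the values of $X$ and $Y$ are in hand, the remaining verification is routine algebra.
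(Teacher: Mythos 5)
Your proposal follows essentially the same route as the paper: identify $X=k'(\tau_0)$ and $Y=k'(5\tau_0)$ as explicitly computable algebraic numbers at the CM points and then verify the two relations \eqref{eq:relXYxy} by exact algebra --- the paper does this by computing their common degree-$16$ minimal polynomial together with numerical approximations, and its Remark \ref{rk:XYalg} carries out your ``classical tables'' variant via the degree-$2$ modular equation and the tabulated values $k_{45}$, $G_{45}$ rather than the degree-$5$ equation you suggest. One correction to keep your certification step sound: the class number $4$ of discriminant $-180$ does \emph{not} make $k'^2(\tau_0)$ quartic over $\mathbb{Q}$; the paper's minimal polynomial $z^8\, p\left(z^2+1/z^2\right)$ shows that $X$ has degree $16$ and $X^2$ degree $8$ over $\mathbb{Q}$ (the modulus $k^2$ generates a field strictly larger than the ring class field, where the class number bound applies), so the degree bound used when recognizing your high-precision numerics as algebraic numbers must be taken accordingly larger.
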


\begin{proof}
  Note that $X$ and $Y$ are both values of modular functions (with respect to
  some congruence subgroup) at a quadratic irrational point in the upper
  half-plane. By standard principles, $X$ and $Y$ are therefore effectively
  computable algebraic numbers {\cite{coxCM}}, {\cite{borwein-piagm}}. In the
  case at hand, we find that $X$ and $Y$ both have the palindromic minimal
  polynomial $z^8 p \left( z^2 + 1 / z^2 \right)$ where
  \[ p (z) = z^4 + 88796296 z^3 + 237562136 z^2 - 595063264 z -
     470492144. \]
  Together with the approximate values $X \approx 0.57884718 - 0.81543604 i$
  and $Y \approx 0.99999998 - 0.00021224 i$ this determines both as algebraic
  numbers. It is then computational routine to verify that the two relations
  (\ref{eq:relXYxy}) are indeed satisfied.
\end{proof}

Nevertheless, in order to obtain explicit radical expressions for $X$ and $Y$,
we sketch in Remark \ref{rk:XYalg} how one may alternatively use modular
equations and known singular values to evaluate $k' (\tau_0)$ and
$k' (5\tau_0)$ without assistance by a computer algebra system.
Ultimately, this allows us to give rather appealing radical expressions in
Remark \ref{rk:XYrad}.

\begin{remark}
  \label{rk:XYalg}We denote the singular values in the usual way as $k_n = k
  \left( i \sqrt{n} \right)$ and $k_n' = k' \left( i \sqrt{n} \right) = k
  \left( i / \sqrt{n} \right)$. Then Ramanujan's class invariant $G_n$
  satisfies (or may be introduced as)
  \[ G_n^{12} = \frac{1}{2 k_n k_n'} . \]
  The reason for introducing this additional notation is that the values $G_n$
  and $k_n$ (via $\alpha_n \assign k_n^2$) are extensively tabulated
  {\cite[Chapter 34]{berndtV}}. We observe that
  \begin{equation}
    k' (\tau) = \frac{1 - k \left( 2 \tau \right)}{1 + k \left( 2
    \tau \right)} \label{eq:mod2k}
  \end{equation}
  is a particular form of the modular equation of degree $2$. Hence to find
  $k' (5\tau_0)$ we need only evaluate $k (10\tau_0)$. To this end, using (\ref{eq:ktr}),
  \[ k (10\tau_0) = k \left( 1 + 3 i \sqrt{5} \right) = i
     \frac{k_{45}}{k'_{45}} = 2 i k_{45}^2 G_{45}^{12} . \]
  Equipped with the values $k_{45}^2 = \alpha_{45}$ and $G_{45}$ found in
  {\cite[Chapter 34]{berndtV}}, we then obtain
  \begin{equation}
    k (10\tau_0) = i \left( \sqrt{\frac{7 + 3 \sqrt{5}}{4}} -
    \sqrt{\frac{3 + 3 \sqrt{5}}{4}} \right)^4  \left( \sqrt{\frac{3 +
    \sqrt{5}}{2}} - \sqrt{\frac{1 + \sqrt{5}}{2}} \right)^4 \label{eq:k10tau}
  \end{equation}
  and hence have evaluated
  \begin{equation}
    Y = k' (5\tau_0) = \frac{1 - k (10\tau_0)}{1 +
    k (10\tau_0)} . \label{eq:Yalg}
  \end{equation}
  Similar considerations apply for $X = k' (\tau_0)$ (which in
  principle can also be determined from (\ref{eq:Yalg}) and the modular
  equation (\ref{eq:mod5}) of degree $5$).
\end{remark}

\begin{remark}
  \label{rk:XYrad}It can be seen from (\ref{eq:mod2k}) and (\ref{eq:XYtwist})
  that the numbers $k' (5\tau_0)$ and $k (10\tau_0)$ are conjugate as algebraic numbers. In light of (\ref{eq:k10tau})
  the evaluation (\ref{eq:Yalg}) may therefore be further simplified. Indeed,
  we have the radical expressions
  \begin{align*}
    X & = i \left( \sqrt{\frac{7 - 3 \sqrt{5}}{4}} - \sqrt{\frac{3 - 3
    \sqrt{5}}{4}} \right)^4  \left( \sqrt{\frac{3 - \sqrt{5}}{2}} -
    \sqrt{\frac{1 - \sqrt{5}}{2}} \right)^4,\\
    Y & = i \left( \sqrt{\frac{7 - 3 \sqrt{5}}{4}} - \sqrt{\frac{3 - 3
    \sqrt{5}}{4}} \right)^4  \left( \sqrt{\frac{3 - \sqrt{5}}{2}} +
    \sqrt{\frac{1 - \sqrt{5}}{2}} \right)^4 .
  \end{align*}
  Each factor in these expressions is an algebraic unit (and hence so are $X$
  and $Y$). In particular, the product $X Y$ and the quotient $X / Y$ take the
  particularly simple forms
  \[ X Y = - \left( \sqrt{\frac{7 - 3 \sqrt{5}}{4}} - \sqrt{\frac{3 - 3
     \sqrt{5}}{4}} \right)^8, \hspace{1em} \frac{X}{Y} = \left( \sqrt{\frac{3
     - \sqrt{5}}{2}} - \sqrt{\frac{1 - \sqrt{5}}{2}} \right)^8 . \]
  We also note that using the stated radical expressions it is trivial to
  verify that $X$ and $Y$ satisfy the two relations (\ref{eq:relXYxy}).
\end{remark}

\begin{remark}
  Let us indicate how one may find the value of $\tau_0$ used in Proposition
  \ref{prop:tau}. Let $x = \frac{1}{480}$ and $y = 8$ and assume that the
  numbers $X, Y$ satisfy the two relations (\ref{eq:relXYxy}) which, upon
  clearing denominators, are polynomial. By computing resultants, we conclude
  that $X$ and $Y$ both have minimal polynomial $q (z)$ as stated
  in the proof of Proposition \ref{prop:tau}. For each root $X$ of $q (z)$, we can now find $\tau_0$ such that $X = k' (\tau_0)$
  using the classical inversion relation
  \[ \tau = i \frac{K \left( k' \right)}{K \left( k \right)} \]
  with the values $k' = X$ and $k = \sqrt{1 - X^2}$. With $\tau_0$ such
  determined, $k' (\tau_0)^2 = X^2$. This allows $\tau_0$ to be
  numerically computed from $X$ and it only remains to recognize $\tau_0$, as
  is possible here, as a quadratic irrationality.
  
  Different choices of roots $X$ of $q (z)$ lead to different
  values for $\tau_0$ but these are related by the symmetries observed in
  Remark \ref{rk:XYsym}. Essentially, this leaves two choices: $\tau_0$ as
  presented in Proposition \ref{prop:tau} and, related by (\ref{eq:mod2k}),
  $\tau_1 = - \frac{1}{10 \tau_0}$ which also satisfies the statement of
  Proposition \ref{prop:tau}. The second choice, however, results in values
  for $X, Y$ which are not sufficiently close to $1$ in order to use
  (\ref{eq:Ahyp}). This is further explained in Remark \ref{rk:tau1fail}.
\end{remark}

Following the notation of {\cite{cwz-legendre}} let us denote
\begin{equation*}
  F(\alpha) = \pFq21{1/2,1/2}{1}{\alpha}, \hspace{1em}
  G(\alpha) = \alpha \frac{\md}{\md\alpha} F(\alpha) = \frac\alpha4 \pFq21{3/2,3/2}{2}{\alpha}.
\end{equation*}
Throughout, we assume the standard branch cut for $F$ and $G$, with a cut
along the real axis from $1$ to $\infty$.

\begin{proposition}
  \label{prop:tauA}Let $x = \frac{1}{480}$ and $y = 8$. Let $X = k' (\tau_0)$, $Y = k' (5\tau_0)$ for $\tau_0 = \frac{1}{2}
  + \frac{3}{10} \sqrt{- 5}$ as in Proposition \ref{prop:tau}. Then
  (\ref{eq:Ahyp}) holds in a neighborhood of these values. In particular,
  \begin{equation}
    A (x, y) = \frac{1 + X Y}{2} F (1-X^2) F (1-Y^2) . \label{eq:AFXY}
  \end{equation}
\end{proposition}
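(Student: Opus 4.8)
The plan is to make $\tau$ the fundamental parameter and then analytically continue the identity of Theorem~\ref{thm:Ahyp} from a neighborhood of $i\infty$ down to $\tau_0$. Setting $X = k'(\tau)$ and $Y = k'(5\tau)$, the relations \eqref{eq:relXYxy} determine $x = x(\tau)$ and $y = y(\tau)$ as analytic functions of $\tau$, and by Proposition~\ref{prop:tau} one has $x(\tau_0) = \tfrac1{480}$ and $y(\tau_0) = 8$. First I would record that, as $\tau \to i\infty$ along the vertical ray $\Re\tau = \tfrac12$, one has $q \to 0$ and hence $k'(\tau), k'(5\tau) \to 1$; thus $(X, Y)$ enters the neighborhood of $(1,1)$ in which the Wan--Zudilin summation underlying Theorem~\ref{thm:Ahyp} is valid, so that \eqref{eq:Ahyp} holds for all $\tau$ in a neighborhood of $i\infty$.

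The next step is to recast both sides as single-valued analytic functions of $\tau$, so that analytic continuation applies. Since $1 - X^2 = k^2(\tau)$ and $1 - Y^2 = k^2(5\tau)$ by $k^2 + (k')^2 = 1$, the identity \eqref{eq:Kktheta} lets me rewrite the right-hand side of \eqref{eq:Ahyp} near $i\infty$ as
\[
  \frac{1 + k'(\tau) k'(5\tau)}{2}\, \theta_3(\tau)^2\, \theta_3(5\tau)^2,
\]
which is analytic and single-valued throughout the upper half-plane because $\theta_3$ has no zeros there; this expression therefore serves as the analytic continuation of the right-hand side. The left-hand side $A(x(\tau), y(\tau))$ is likewise analytic along the ray, being a composition of the analytic maps $x(\tau), y(\tau)$ with $A$, provided the curve $\tau \mapsto (x(\tau), y(\tau))$ stays within a domain of convergence (or analytic continuation) of the series \eqref{eq:Fdef}; this I would confirm using the criterion of Proposition~\ref{prop:conv}. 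With both sides analytic on a neighborhood of the ray and agreeing near $i\infty$, analytic continuation propagates the equality down to $\tau = \tau_0$, yielding $A(\tfrac1{480}, 8) = \frac{1+XY}{2}\theta_3(\tau_0)^2\theta_3(5\tau_0)^2$.

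It then remains to identify $\theta_3(\tau_0)^2$ with $F(1-X^2)$ and $\theta_3(5\tau_0)^2$ with $F(1-Y^2)$ for the principal branch of $F$ with cut along $[1,\infty)$, and this is the main obstacle. The relation $F(k^2(\tau)) = \theta_3(\tau)^2$ is, as noted, only guaranteed near $i\infty$; away from there the principal branch of $F$ may disagree with the continuation of $\theta_3^2$ whenever the argument $k^2(\tau)$ crosses the cut. For $Y$ this is harmless, since $\Im(5\tau_0) = \tfrac{3\sqrt5}{2}$ is large, so $k^2(5\tau)$ never leaves a small neighborhood of $0$ and stays on the principal sheet. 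For $X$, whose value $\approx 0.579 - 0.815 i$ lies far from $1$, I would track the curve $\tau \mapsto k^2(\tau)$ as $\tau$ descends the ray from $i\infty$ to $\tau_0$ and verify that it avoids $[1,\infty)$ --- equivalently, that $k'(\tau)^2$ avoids $(-\infty, 0]$ --- so that the principal branch of $F$ furnishes the correct continuation. This branch bookkeeping is precisely what separates $\tau_0$ from the companion point $\tau_1 = -\tfrac{1}{10\tau_0}$, whose values of $X, Y$ sit on the wrong sheet and for which \eqref{eq:Ahyp} fails; see Remark~\ref{rk:tau1fail}. Once the branch identification is secured, \eqref{eq:AFXY} follows, and the same argument gives \eqref{eq:Ahyp} throughout a neighborhood of $(X, Y)$.
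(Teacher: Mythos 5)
Your strategy is genuinely different from the paper's: you homotope in the modular variable $\tau$ along the ray $\Re\tau=\tfrac12$ up to $i\infty$, and you make the right-hand side single-valued for free by rewriting it as $\tfrac{1+k'(\tau)k'(5\tau)}{2}\,\theta_3(\tau)^2\theta_3(5\tau)^2$. The paper instead homotopes in the $(X,Y)$-plane along the straight segments $X_t=(1-t)X+t$, $Y_t=(1-t)Y+t$, keeps $F$ throughout, and observes that $1-X_t^2$, $1-Y_t^2$ never meet the cut because $X_t,Y_t$ are never purely imaginary. Your theta-function reformulation of the right-hand side is an attractive simplification, and the outline is sound.

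However, as written the argument has a genuine gap: the two verifications on which everything hinges are only announced (``this I would confirm using the criterion of Proposition~\ref{prop:conv}'', ``I would track the curve $\tau\mapsto k^2(\tau)$ \ldots and verify that it avoids $[1,\infty)$'') and never carried out. These checks are not routine bookkeeping --- they are the entire content of the proposition. Remark~\ref{rk:tau1fail} exhibits the companion point $\tau_1=-\tfrac{1}{10\tau_0}$ for which the identical setup fails precisely because the path $(x_t,y_t)$ exits and re-enters the convergence domain of \eqref{eq:AxyP}, so the monodromy of $A$ (not of $F$) destroys the identity. Your parenthetical hedge ``(or analytic continuation)'' for the left-hand side is exactly the trap: analytic continuation of $A$ along a path that leaves the convergence region need not return to the value of the series at the endpoint, so you must certify that the path never leaves it. Along your ray this means controlling $\bigl|\tfrac{X-Y}{4(1+XY)}\bigr|$ and the Legendre growth factor, and separately the image curve $k^2(\tau)$, as functions of the transcendental quantities $k'(\tau)$, $k'(5\tau)$ for all $\Im\tau\ge\tfrac{3\sqrt5}{10}$; this is harder to certify rigorously than the paper's route, where the linear homotopy reduces everything to a handful of explicit triangle-inequality estimates (\eqref{eq:est1}--\eqref{eq:estB}) involving only the two fixed algebraic numbers $X$ and $Y$. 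To complete your proof you would need to supply uniform bounds along the ray playing the role of \eqref{eq:estA} and \eqref{eq:estB}, plus the verification that $\Im k^2(\tau)$ (say) stays positive so the cut $[1,\infty)$ is avoided.
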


\begin{proof}
  We note that we cannot rely directly on Theorem \ref{thm:Ahyp} because $X$
  does not lie in a sufficient neighborhood of $1$. In fact, $\left| 1 - X^2
  \right| \approx 1.63087$ from the approximations for $X$ and $Y$ that were
  noted in the proof of Proposition \ref{prop:tau}.
  
  Throughout, let $t \in \left[ 0, 1 \right)$. We consider the convex
  combinations
  \[ X_t = \left( 1 - t \right) X + t, \hspace{1em} Y_t = \left( 1 - t \right)
     Y + t, \]
  along with the corresponding values $x_t, y_t$, determined by
  (\ref{eq:relXYxy}). Our strategy is as follows: for $t_0 < 1$ large enough
  the values $X_{t_0}, Y_{t_0}$ are sufficiently close to $1$ so that Theorem
  \ref{thm:Ahyp} applies, showing that, for $t = t_0$,
  \begin{equation}
    A (x_t, y_t) = \frac{1 + X_t Y_t}{2} F \left( 1 - X_t^2 \right) F \left( 1
    - Y_t^2 \right) . \label{eq:AFXYt}
  \end{equation}
  We wish to conclude that (\ref{eq:AFXY}) holds by analytically continuing
  (\ref{eq:AFXYt}) from $t = t_0$ to $t = 0$. By construction, $X_t, Y_t$ are
  not purely imaginary and hence the arguments $1 - X_t^2$ and $1 - Y_t^2$ of
  $F$ stay off the branch cut. We are thus done if we can show that, for all
  $t \in \left[ 0, t_0 \right]$, the values $x_t, y_t$ are such that the
  series (\ref{eq:AxyP}) for $A \left( x_t, y_t \right)$ converges absolutely.
  That this is crucial is illustrated by Remark \ref{rk:tau1fail}.
  
  To begin, we note that
  \begin{align}
    1 - X_t Y_t & = \left( 1 - t \right) \left( 1 - X Y + t \left( 1 - X
    \right) \left( 1 - Y \right) \right),  \label{eq:est1}\\
    \left( 1 + X_t Y_t \right) - \left( X_t + Y_t \right)
    & = \left( 1 - t \right)^2 \left( 1 - X \right) \left( 1 - Y \right) .  \label{eq:est2}
  \end{align}
  Using (\ref{eq:est1}) we then estimate
  \[ \left| 1 - X_t Y_t \right| \leqslant \left| 1 - X Y \right| + \left| 1 -
     X \right|  \left| 1 - Y \right| \leqslant 0.92 \]
  where the final inequality simply holds because with our values of $X$ and
  $Y$ (recorded in the proof of Proposition \ref{prop:tau}) the middle
  quantity is, to three digits, equal to $0.918$. Consequently,
  \[ \left| 1 + X_t Y_t \right| \geqslant 2 - \left| 1 - X_t Y_t \right|
     \geqslant 1.08, \]
  from which we conclude
  \begin{align}
    \left| x_t y_t \right|^{1 / 2} & = \left| \frac{X_t - Y_t}{4 \left( 1
    \noplus + X_t Y_t \right)} \right| \leqslant \frac{\left| X_t - Y_t
    \right|}{4 \cdot 1.08} \leqslant \frac{\left| X - Y \right|}{4 \cdot 1.08}
    \leqslant 0.22. \label{eq:estA} 
  \end{align}
  Once more, the final inequality depends on our specific values of $X$ and
  $Y$. Using both (\ref{eq:est1}) and (\ref{eq:est2}),
  \begin{align}
    1 + \frac{4 x_t}{y_t} & = \left[ \frac{1 - X_t Y_t}{X_t - Y_t} \right]^2
    \left[ \frac{X_t + Y_t}{1 + X_t Y_t} \right]^2 \nonumber\\
    & = \left[ \frac{1 - X Y}{X - Y} + \frac{t \left( 1 - X \right) \left(
    1 - Y \right)}{X - Y} \right]^2 \left[ 1 - \frac{\left( 1 - t \right)^2
    \left( 1 - X \right) \left( 1 - Y \right)}{1 + X_t Y_t} \right]^2
    \nonumber\\
    & = : \left[ A + \varepsilon_1 \right]^2 \left[ 1 + \varepsilon_2
    \right]^2 .  \label{eq:estq}
  \end{align}
  The smallness of the second summands $\varepsilon_1$ and $\varepsilon_2$ is
  witnessed by
  \[ \left| \varepsilon_1 \right| = \left| \frac{t \left( 1 - X \right) \left(
     1 - Y \right)}{X - Y} \right| \leqslant \left| \frac{\left( 1 - X \right)
     \left( 1 - Y \right)}{X - Y} \right| \leqslant 0.00022 \]
  and
  \[ \left| \varepsilon_2 \right| = \left| \frac{\left( 1 - t \right)^2 \left(
     1 - X \right) \left( 1 - Y \right)}{1 + X_t Y_t} \right| \leqslant
     \frac{\left| \left( 1 - X \right) \left( 1 - Y \right) \right|}{1.08}
     \leqslant 0.00019. \]
  Combining these with $\left| A \right| \leqslant 1.00042$, expanding
  (\ref{eq:estq}), subtracting $1$, and using the triangle inequality, we find
  \begin{equation}
    \left| \frac{4 x_t}{y_t} \right| \leqslant \left| \left( \frac{1 - X Y}{X
    - Y} \right)^2 - 1 \right| + 0.00083 \leqslant 0.0017. \label{eq:estB}
  \end{equation}
  The estimates (\ref{eq:estA}) and (\ref{eq:estB}) together with
  (\ref{eq:convratio}) now show that the series (\ref{eq:AxyP}) converges
  absolutely for all pairs $x_t, y_t$ (the estimates show that the convergence
  is geometric with ratio at most $0.85$).
\end{proof}

\begin{remark}
  \label{rk:tau1fail}It follows from (\ref{eq:XYtwist}) and (\ref{eq:mod2k})
  that $\tau_1 = - \frac{1}{10 \tau_0}$ also satisfies the statement of
  Proposition \ref{prop:tau}. However, Proposition \ref{prop:tauA} fails to
  hold with
  \begin{align*}
    X & = k' (\tau_1) = k (10\tau_0) \approx
    0.000106121305 i,\\
    Y & = k' (5\tau_1) = k \left( 2 \tau_0 \right) \approx
    0.51647560 i.
  \end{align*}
  The fact that the corresponding $1 - X^2$ and $1 - Y^2$ are on the branch
  cut of $F$ is not the reason for the failing of (\ref{eq:AFXY}), as can be
  seen from the fact that, for $t > 0$, the $1 - X_t^2, 1 - Y_t^2$ avoid the
  branch cut so that the monodromy of $F$ is not invoked by ever crossing the
  cut. However, the values $x_t, y_t$ in the process are such that the domain
  of convergence of $A$, as defined in (\ref{eq:AxyP}), is first left and then
  reentered again. Thus the failure of (\ref{eq:AFXY}) is due to the monodromy
  of $A$. This illustrates the need for an argument such as the one given in
  the proof of Proposition \ref{prop:tauA}. Clearly, it would be nice to
  replace it with a more conceptual and concise argument.
\end{remark}

\section{Step III: The execution}\label{sec:exec}

With Proposition \ref{prop:tauA} established, Sun's conjecture
(\ref{eq:sunconjA}) may now be confirmed using precisely the techniques of,
for instance, {\cite{berndt-piseries2010}}, {\cite{cwz-legendre}} and
{\cite{wz-legendre}}. It is characteristic for this technique to produce
unwieldy algebraic numbers along the way. Only in a few cases have we
therefore chosen to include these numbers explicitely by giving their minimal
polynomial along with a sufficient numerical approximation. Of course, most
computer algebra systems are very comfortable with symbolically manipulating
algebraic numbers (here, the algebraic degree never exceeds $16$) and it is in
some way rather fitting that computer algebra is at work when establishing
conjectures that were found by computational search.

\begin{proposition}
  \label{prop:Ax}Suppose that (\ref{eq:Ahyp}) holds. Then we have
  \[ \frac{\md A}{\md x} = r (X,Y) G (1-X^2) F (1-Y^2) + s (X,Y) F (1-X^2) G (1-Y^2) \]
  where $r$ and $s$ are explicit rational functions.
\end{proposition}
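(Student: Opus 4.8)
The plan is to differentiate the identity (\ref{eq:Ahyp}) directly, treating $A$ as a function of $x$ (with $y$ held fixed) through $X = X(x)$ and $Y = Y(x)$, which are tied to $x$ by the two relations (\ref{eq:relXYxy}). Writing $X' = \md X / \md x$, $Y' = \md Y / \md x$ and using $\frac{\md}{\md\alpha}F(\alpha) = G(\alpha)/\alpha$, the chain rule turns (\ref{eq:Ahyp}) into
\begin{align*}
  \frac{\md A}{\md x}
  &= \frac{Y X' + X Y'}{2}\, F(1-X^2)\, F(1-Y^2) \\
  &\quad - \frac{(1+XY)X X'}{1-X^2}\, G(1-X^2)\, F(1-Y^2)
     - \frac{(1+XY)Y Y'}{1-Y^2}\, F(1-X^2)\, G(1-Y^2).
\end{align*}
The coefficients of the two $G$-terms already have exactly the required shape, with $r = -\tfrac{(1+XY)XX'}{1-X^2}$ and $s = -\tfrac{(1+XY)YY'}{1-Y^2}$, so the proposition reduces to (i) producing explicit rational expressions for $X'$ and $Y'$, and (ii) disposing of the first summand.

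For (i) I would differentiate both equations of (\ref{eq:relXYxy}) in $x$. Since their left-hand sides are $-xy$ and $1 + 4x/y$, this produces the linear system $\phi_X X' + \phi_Y Y' = -y$ and $\psi_X X' + \psi_Y Y' = 4/y$, where $\phi,\psi$ are the rational functions of $X,Y$ appearing on the right of (\ref{eq:relXYxy}) and the subscripts denote partial derivatives. Cramer's rule solves for $X'$ and $Y'$, and since $x^2$ and $y^2$ are themselves rational in $X,Y$ (as one sees by respectively multiplying and dividing the two relations), one can eliminate $x$ and $y$ and present $X', Y'$—and hence $r$ and $s$—as rational functions of $X,Y$ (with the fixed value of $y$).

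The essential point is (ii). The coefficient $\tfrac12(Y X' + X Y') = \tfrac12\frac{\md}{\md x}(XY)$ of $F(1-X^2)F(1-Y^2)$ is \emph{not} identically zero, so it must be folded into the two $G$-terms by means of a linear relation among the three products $F(1-X^2)F(1-Y^2)$, $G(1-X^2)F(1-Y^2)$ and $F(1-X^2)G(1-Y^2)$. Such a relation is available in the modular parametrization of Proposition \ref{prop:tauA}: there $F(1-X^2) = \theta_3(\tau)^2$ and $F(1-Y^2) = \theta_3(5\tau)^2$ by (\ref{eq:Kktheta}), and the classical formulas express $E_2(\tau)$ and $E_2(5\tau)$ through the pairs $F(1-X^2)^2,\, F(1-X^2)G(1-X^2)$ and $F(1-Y^2)^2,\, F(1-Y^2)G(1-Y^2)$. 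Since $E_2(\tau) - 5E_2(5\tau)$ is a holomorphic weight-$2$ form, the quotient $(E_2(\tau) - 5E_2(5\tau))/(F(1-X^2)F(1-Y^2))$ is a modular function and, in the present setting, a rational function of $X,Y$; combining this with the algebraic relation (the degree-$5$ modular equation and its multiplier) between $\theta_3(\tau)^2$ and $\theta_3(5\tau)^2$ furnishes the sought relation and lets me rewrite $F(1-X^2)F(1-Y^2)$ as a rational combination of the two $G$-products. Substituting this back collapses the three-term expression to the asserted two terms and absorbs the extra contribution into $r$ and $s$ while keeping them rational. I expect establishing this quasimodular relation, and checking that rationality survives the elimination, to be the main obstacle; by contrast the differentiation and the solution of the linear system are routine.
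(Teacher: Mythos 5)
Your part (i) is exactly the paper's argument: differentiate (\ref{eq:Ahyp}) by the chain rule using $\frac{\md}{\md X}F(1-X^2)=\frac{-2X}{1-X^2}G(1-X^2)$, then obtain $\frac{\md X}{\md x}$ and $\frac{\md Y}{\md x}$ by implicitly differentiating the two relations (\ref{eq:relXYxy}) and solving the resulting $2\times 2$ linear system; your coefficients for the two $G$-terms coincide with the paper's. The substance of your review lies in part (ii), and there you have put your finger on a real issue: the coefficient $\tfrac12\frac{\md(XY)}{\md x}$ of $F(1-X^2)F(1-Y^2)$ does \emph{not} vanish. Indeed, solving the linear system and using the relation obtained by eliminating $x$ from (\ref{eq:relXYxy}) (with $y$ held fixed), one finds that $\frac{\md (XY)}{\md x}=0$ would force $(X-Y)^2=2(1-XY)^2$, which is not a consequence of (\ref{eq:relXYxy}) and fails numerically at the values of Proposition \ref{prop:tau}. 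The paper's own proof displays this $FF$ contribution (the terms $Y F(1-X^2)$ and $X F(1-Y^2)$ inside the brackets) and then simply asserts that ``combining these results in the claimed combination''; read literally, the proposition should carry a third summand $q(X,Y)\,F(1-X^2)F(1-Y^2)$ with $q=\tfrac12\frac{\md(XY)}{\md x}$ rational. This is harmless downstream, since (\ref{eq:sunconjFGab}) accommodates an $F(\alpha)F(\beta)$ term in any case (only its displayed coefficient $233\,\tfrac{1+XY}{2}$ would then acquire a further algebraic contribution from $\theta_x A$).

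Concerning your proposed repair: the idea of eliminating the $FF$ product via the weight-two relation among $E_2(\tau)$, $E_2(5\tau)$ and $\theta_3(\tau)^2\theta_3(5\tau)^2$ --- equivalently via Proposition \ref{prop:FGb}, which writes $F(\beta)$ and $G(\beta)$ as algebraic combinations of $F(\alpha)$ and $G(\alpha)$ and hence exhibits a linear dependence of the three products over the algebraic functions --- is sound \emph{in the modular setting}. But it is not what the paper does, and two caveats apply. First, the proposition assumes only that (\ref{eq:Ahyp}) holds, with no modular parametrization; for generic $X,Y$ the three products $FF$, $GF$, $FG$ are linearly independent over the algebraic functions, so no such elimination exists and the two-term form cannot be literally correct in that generality. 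Second, the coefficients produced by your elimination involve the multiplier (\ref{eq:mult}) and are algebraic rather than rational in $X,Y$, so they would not satisfy the statement as worded. The pragmatic resolution is to keep the $FF$ term and read the proposition with three summands, whereupon your computation in part (i) --- which is the paper's computation --- already constitutes a complete proof, and your part (ii) can be dispensed with.
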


\begin{proof}
  We note that
  \[ \frac{\md}{\md X} F (1-X^2) = \frac{- 2 X}{1 - X^2}
     G (1-X^2) \]
  and hence, differentiating (\ref{eq:Ahyp}),
  \begin{align*}
    2 \frac{\md A}{\md x} & = \frac{\md A}{\md X}  \frac{\md
    X}{\md x} + \frac{\md A}{\md Y}  \frac{\md Y}{\md x}\\
    & = \left[ Y F (1-X^2) - \frac{2 X \left( 1 + X Y
    \right)}{1 - X^2} G (1-X^2) \right] F (1-Y^2)  \frac{\md X}{\md x}\\
    &\quad  + \left[ X F (1-Y^2) - \frac{2 Y \left( 1 + X Y
    \right)}{1 - Y^2} G (1-Y^2) \right] F (1-X^2)  \frac{\md Y}{\md x} .
  \end{align*}
  Further, $\frac{\md X}{\md x}$ and $\frac{\md Y}{\md x}$ are
  determined by differentiating the two relations (\ref{eq:relXYxy}) which
  gives
  \begin{align*}
    - y & = \frac{X - Y}{8 \left( 1 + X Y \right)^3} \left[ \left( 1 + Y^2
    \right) \frac{\md X}{\md x} - \left( 1 + X^2 \right) \frac{\md
    Y}{\md x} \right],\\
    \frac{4}{y} & = \frac{4 \left( X + Y \right) \left( 1 - X Y
    \right)}{\left[ \left( X - Y \right) \left( 1 + X Y \right) \right]^3}
    \bigg[ X \left( 1 + Y^2 \right) (1-X^2) \frac{\md
    Y}{\md x} \\
    &\quad\quad- Y \left( 1 + X^2 \right) (1-Y^2)
    \frac{\md X}{\md x} \bigg] .
  \end{align*}
  Combining these results in the claimed combination.
\end{proof}

For our purposes it will become convenient to employ the algebraic numbers
\begin{equation}
  \alpha \assign 1 - X^2 = k^2 (\tau_0), \hspace{2em} \beta
  \assign 1 - Y^2 = k^2 (5\tau_0) \label{eq:ab}
\end{equation}
with $\tau_0$ as in Proposition \ref{prop:tau}. For the convenience of the
reader, we record their numerical values as
\[ \alpha \approx 1.329871878 + 0.944025712 i, \hspace{1em} \beta \approx
   9.00938 \times 10^{- 8} + 0.0004244852051 i. \]
We can then express the left-hand side of (\ref{eq:sunconjA}) in terms of the
functions $F$ and $G$ at the values $\alpha$ and $\beta$.

\begin{proposition}
  With $\alpha, \beta$ as in (\ref{eq:ab}), Conjecture (\ref{eq:sunconj}) is
  equivalent to
  \begin{equation}\label{eq:sunconjFGab}
    r_1 F (\alpha) F (\beta) + r_2 G \left( \alpha
    \right) F (\beta) + r_3 F (\alpha) G \left(
    \beta \right) = \frac{520}{\pi}
  \end{equation}
  where $r_1, r_2, r_3$ are explicitly computable algebraic numbers.
\end{proposition}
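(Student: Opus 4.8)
The goal is to translate the identity \eqref{eq:sunconjA}, which is stated in terms of $A(x,y)$ and its derivative $\theta_x A$ at the specific point $(x,y)=(1/480,8)$, into the form \eqref{eq:sunconjFGab} involving $F$ and $G$ evaluated at $\alpha=1-X^2$ and $\beta=1-Y^2$. The plan is to start from the two expressions I already have in hand: the value $A(x,y)$ from Proposition \ref{prop:tauA}, namely $A=\tfrac{1+XY}{2}F(\alpha)F(\beta)$, and the derivative $\frac{\md A}{\md x}$ from Proposition \ref{prop:Ax}, which is a combination $r(X,Y)\,G(\alpha)F(\beta)+s(X,Y)\,F(\alpha)G(\beta)$ with $r,s$ explicit rational functions of $X,Y$. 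Since $\theta_x A = x\,\frac{\md A}{\md x}$, substituting $x=1/480$ simply rescales $r$ and $s$.

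First I would assemble the left-hand side of \eqref{eq:sunconjA}, that is $233\,A+1054\,\theta_x A$, by plugging in these two formulas. This produces, after collecting terms, an expression of exactly the shape
\begin{equation*}
  r_1 F(\alpha)F(\beta) + r_2 G(\alpha)F(\beta) + r_3 F(\alpha)G(\beta),
\end{equation*}
where $r_1=233\cdot\tfrac{1+XY}{2}$, $r_2=1054\,x\,r(X,Y)$, and $r_3=1054\,x\,s(X,Y)$, all evaluated at the specific algebraic values $X=k'(\tau_0)$ and $Y=k'(5\tau_0)$ with $x=1/480$. Because $X$ and $Y$ are effectively computable algebraic numbers (Proposition \ref{prop:tau}) and $r,s$ are explicit rational functions, each $r_i$ is an explicitly computable algebraic number. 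Note that no term $G(\alpha)G(\beta)$ appears, since $A$ itself carries no such term and differentiating $F(\alpha)F(\beta)$ once produces only the two mixed products; this is why \eqref{eq:sunconjFGab} has exactly three terms.

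Finally I would verify that \eqref{eq:sunconjA} is genuinely equivalent to \eqref{eq:sunconjFGab}, not merely implied by it. Equivalence follows because the substitution is reversible: \eqref{eq:sunconjA} is, by definition, just Sun's conjecture \eqref{eq:sunconj} rewritten via $A$, and Proposition \ref{prop:tauA} guarantees $A(x,y)=\tfrac{1+XY}{2}F(\alpha)F(\beta)$ holds as an identity (in a neighborhood of the relevant values, so that differentiation is legitimate). Thus rewriting the left-hand side of \eqref{eq:sunconjA} as the $F,G$ combination is an identity, and the equation $\tfrac{520}{\pi}=\cdots$ holds in one form if and only if it holds in the other.

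The main obstacle I anticipate is not conceptual but computational and bookkeeping in nature: one must correctly evaluate the rational functions $r(X,Y)$ and $s(X,Y)$ at the unwieldy degree-$16$ algebraic values $X,Y$, and then simplify to obtain clean algebraic expressions for $r_1,r_2,r_3$. Here I would rely on the fact, emphasized in the text preceding Step III, that a computer algebra system handles such algebraic-number arithmetic routinely; in practice I would record each $r_i$ by its minimal polynomial together with a numerical approximation, exactly as the paper proposes to do for the unwieldy intermediate quantities. The only subtlety requiring care is the consistent choice of branches for $F$ and $G$ (with the stated cut from $1$ to $\infty$) so that the numerical values of $F(\alpha),G(\alpha),F(\beta),G(\beta)$ match the analytic continuation used in Proposition \ref{prop:tauA}.
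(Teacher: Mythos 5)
Your proposal matches the paper's proof: both combine Proposition \ref{prop:tauA} (for the $F(\alpha)F(\beta)$ term) with Proposition \ref{prop:Ax} (for the two mixed $G$ terms), observe that $\theta_x A = x\,\frac{\md A}{\md x}$ introduces the factor $x=\frac{1}{480}$, and conclude that the coefficients are explicitly computable algebraic numbers because $X$, $Y$ are algebraic and $r$, $s$ are rational functions. Your additional remarks on reversibility and branch choices are sensible but not needed beyond what the paper records.
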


\begin{proof}
  Combining Proposition \ref{prop:tauA} with Proposition \ref{prop:Ax}, we
  find that the left-hand side of (\ref{eq:sunconjA}) equals
  \[ 233 \frac{1 + X Y}{2} F (\alpha) F (\beta) +
     \frac{1054}{480} r (X,Y) G (\alpha) F \left(
     \beta \right) + s (X,Y) F (\alpha) G \left(
     \beta \right) \]
  with $r, s$ as in Proposition \ref{prop:Ax}. Since $X$ and $Y$ are
  algebraic, the claim follows.
\end{proof}

As in {\cite[Proposition 3]{cwz-legendre}} we next express the values $F
(\beta), G (\beta)$ by the corresponding values $F
(\alpha), G (\alpha)$.

\begin{proposition}
  \label{prop:FGb}Let $\alpha, \beta$ be as in (\ref{eq:ab}). Then there are
  explicitly computable algebraic numbers $t, t_1, t_2$ such that
  \begin{itemize}
    \item $F (\beta) = t F (\alpha)$,
    
    \item $G (\beta) = t_1 F (\alpha) + t_2 G \left(
    \alpha \right)$.
  \end{itemize}
\end{proposition}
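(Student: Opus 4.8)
The entire statement is read off from the modular dictionary \eqref{eq:Kktheta}. Since $\alpha = k^2(\tau_0)$ and $\beta = k^2(5\tau_0)$, this identity gives $F(\alpha) = \theta_3(\tau_0)^2$ and $F(\beta) = \theta_3(5\tau_0)^2$, so the first bullet is nothing but the claim that
\[ t = \frac{F(\beta)}{F(\alpha)} = \frac{\theta_3(5\tau_0)^2}{\theta_3(\tau_0)^2} \]
is algebraic. This is an instance of complex multiplication: $\theta_3(\tau)^2$ and $\theta_3(5\tau)^2$ are weight-$1$ modular forms on a common congruence subgroup, so their quotient is a weight-$0$ modular function, whose value at the quadratic irrationality $\tau_0$ is algebraic --- exactly the mechanism used for $X, Y$ in Proposition \ref{prop:tau}. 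Concretely $t$ is the multiplier of the classical degree-$5$ modular equation relating $k(\tau_0)$ and $k(5\tau_0)$, and I would extract its minimal polynomial by the same resultant computation, fed by the tabulated values of \cite[Ch.~34]{berndtV}, that produced the radicals in Remark \ref{rk:XYrad}.

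For the second bullet, following \cite[Proposition~3]{cwz-legendre}, I would route everything through the quasimodular Eisenstein series $E_2$, which is the natural carrier of the first derivative $G$. Differentiating $F(\alpha) = \theta_3(\tau_0)^2$ with the classical logarithmic derivatives of the theta-constants, together with $\theta_2^4 = \alpha\,\theta_3^4$ and $\theta_4^4 = (1-\alpha)\theta_3^4$, expresses $E_2(\tau_0)$ as an explicit combination, rational over $\mathbb{Q}(\alpha)$, of $F(\alpha)^2$ and $F(\alpha)G(\alpha)$; the identical computation at $5\tau_0$ does the same for $E_2(5\tau_0)$ in terms of $\beta$, $F(\beta)^2$ and $F(\beta)G(\beta)$. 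The two relations are tied together by the fact that $5E_2(5\tau) - E_2(\tau)$ is a genuine holomorphic modular form of weight $2$ on $\Gamma_0(5)$; as $S_2(\Gamma_0(5)) = 0$ and $M_2(\Gamma_0(5))$ is one-dimensional, its value at $\tau_0$ is, by complex multiplication on a common congruence subgroup, an algebraic multiple of $\theta_3(\tau_0)^4 = F(\alpha)^2$. Substituting this link and the expression for $E_2(\tau_0)$ into the relation for $E_2(5\tau_0)$, and then dividing by $F(\beta) = tF(\alpha)$ from the first part, collapses $G(\beta)$ into the asserted form $t_1F(\alpha) + t_2G(\alpha)$ with explicit algebraic $t_1, t_2$.

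The genuine obstacle is this last input: the algebraicity of $E_2(5\tau_0)$ relative to the period $F(\alpha)^2$. This is the one point where the non-holomorphic defect of $E_2$ threatens to destroy algebraicity --- equivalently, it is the assertion that the relevant quasi-period at the CM point $\tau_0$ is an algebraic multiple of the period --- and it is resolved precisely by the $\Gamma_0(5)$ modularity of $5E_2(5\tau) - E_2(\tau)$, which cancels the $1/\Im\tau$ anomaly. Everything remaining is bookkeeping: one must stay on the branch of the degree-$5$ correspondence selected by $\tau_0$ (the mismatch that makes $\tau_1$ fail in Remark \ref{rk:tau1fail}), and run the elimination to exhibit minimal polynomials, which is routine since the algebraic degrees stay bounded. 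As a consistency check I would also recover $t_1, t_2$ by differentiating the functional identity $F(\beta) = m(\alpha)F(\alpha)$, with $\beta$ tied to $\alpha$ by the modular equation, which gives $t_1 = \beta m'(\alpha)/(\md\beta/\md\alpha)$ and $t_2 = \beta m(\alpha)/(\alpha\,\md\beta/\md\alpha)$.
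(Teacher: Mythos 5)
Your proposal is correct, but for the second bullet it takes a genuinely different route from the paper. The paper works entirely with Ramanujan's explicit degree-$5$ data: the modular equation \eqref{eq:mod5} (differentiated to get $\frac{\md\beta}{\md\alpha}$ as an algebraic function) and the multiplier formula \eqref{eq:mult}, whose logarithmic derivative in $\alpha$ directly yields a linear relation between $\frac{G(\alpha)}{\alpha F(\alpha)}$ and $\frac{G(\beta)}{\beta F(\beta)}$ with algebraic coefficients; combined with $F(\beta)=tF(\alpha)$ this gives $t_1,t_2$. Amusingly, your closing ``consistency check'' --- differentiating $F(\beta)=m(\alpha)F(\alpha)$ along the modular correspondence to get $t_1=\beta m'(\alpha)/(\md\beta/\md\alpha)$ and $t_2=\beta m(\alpha)/(\alpha\,\md\beta/\md\alpha)$ --- \emph{is} the paper's proof, so you have both arguments. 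Your main route instead passes through $E_2$: you use the analogue of Proposition \ref{prop:FGE2} at $\tau_0$ and $5\tau_0$ to trade $F(\alpha)G(\alpha)$ and $F(\beta)G(\beta)$ for $E_2(\tau_0)$ and $E_2(5\tau_0)$, and then invoke the algebraicity of $\bigl(5E_2(5\tau)-E_2(\tau)\bigr)/\theta_3(5\tau)^2\theta_3(\tau)^2$ at the CM point --- which is exactly Ramanujan's multiplier of the second kind $R_5$ from \eqref{eq:mult2}, a tool the paper deploys only later for \eqref{eq:E2as1piF}. Both mechanisms are sound and both reduce to the statement that a weight-$0$ modular function on a congruence subgroup takes algebraic values at quadratic irrationalities (the paper's Remark \ref{rk:FGb}); your $E_2$ route is conceptually cleaner about \emph{why} the quasi-period anomaly cancels, while the paper's route is computationally lighter since \eqref{eq:mult} and \eqref{eq:mod5} are already tabulated in closed form and no Eisenstein values need to be introduced at this stage. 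Your only loose end is the dimension count for $M_2(\Gamma_0(5))$, which is correct ($X_0(5)$ has genus $0$ and two cusps) but not actually needed: what you use is that the quotient by $\theta_3(\tau)^4$ is a modular function on a common subgroup, and that argument stands on its own.
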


\begin{proof}
  In the language of Ramanujan, $\beta = k^2 (5\tau)$ has
  degree 5 over $\alpha = k^2 (\tau)$, whence the numbers
  satisfy the modular equation of degree $5$ {\cite[Entry 19.13]{berndtIII}}
  \begin{equation}
    (\alpha \beta)^{1 / 2} + \left\{ (1 - \alpha) (1 - \beta) \right\}^{1 / 2}
    + 2 \left\{ 16 \alpha \beta (1 - \alpha) (1 - \beta) \right\}^{1 / 6} = 1.
    \label{eq:mod5}
  \end{equation}
  Differentiating with respect to $\alpha$ yields
  \begin{align*}
    &\frac{\beta^{1 / 2}}{\alpha^{1 / 2}} - \frac{\left( 1 - \beta \right)^{1 /
    2}}{\left( 1 - \alpha \right)^{1 / 2}} + 2 \frac{1 - 2 \alpha}{3} 
    \frac{\left[ 16 \beta \left( 1 - \beta \right) \right]^{1 / 6}}{\left[
    \alpha \left( 1 - \alpha \right) \right]^{5 / 6}} \\
    &\quad+ \left[ \frac{\alpha^{1 / 2}}{\beta^{1 / 2}} - \frac{\left( 1 - \alpha
    \right)^{1 / 2}}{\left( 1 - \beta \right)^{1 / 2}} + 2 \frac{1 - 2
    \beta}{3}  \frac{\left[ 16 \alpha \left( 1 - \alpha \right) \right]^{1 /
    6}}{\left[ \beta \left( 1 - \beta \right) \right]^{5 / 6}} \right]
    \frac{\md \beta}{\md \alpha} = 0,
  \end{align*}
  which, upon taking the appropriate roots, provides us with $\frac{\md
  \beta}{\md \alpha}$ as an algebraic function of $\alpha$.
  
  Moreover, the multiplier $\frac{F (\alpha)}{F \left( \beta
  \right)}$ may be expressed as {\cite[Entry 19.13]{berndtIII}}
  \begin{equation}
    \frac{F (\alpha)}{F (\beta)} = \left(
    \frac{\beta}{\alpha} \right)^{1 / 4} + \left( \frac{1 - \beta}{1 - \alpha}
    \right)^{1 / 4} - \left( \frac{\beta \left( 1 - \beta \right)}{\alpha
    \left( 1 - \alpha \right)} \right)^{1 / 4}, \label{eq:mult}
  \end{equation}
  which verifies the first part of the claim.
  
  Further, taking the logarithmic derivative of (\ref{eq:mult}) with respect
  to $\alpha$, we find
  \begin{align*}
    \frac{G (\alpha)}{\alpha F (\alpha)} - \frac{G
    (\beta)}{\beta F (\beta)}  \frac{\md
    \beta}{\md \alpha} & = \frac{\md}{\md \alpha} \left[ \left(
    \frac{\beta}{\alpha} \right)^{1 / 4} + \left( \frac{1 - \beta}{1 - \alpha}
    \right)^{1 / 4} - \left( \frac{\beta \left( 1 - \beta \right)}{\alpha
    \left( 1 - \alpha \right)} \right)^{1 / 4} \right] .
  \end{align*}
  Noting that $\frac{\md \beta}{\md \alpha}$ as well as the right-hand
  side are algebraic functions of $\alpha$, the second claim follows from the first.
\end{proof}

\begin{remark}\label{rk:FGb}
  We briefly note that Proposition \ref{prop:FGb} holds in much greater generality.
  For instance, let $\alpha=k^2(\tau)$ and $\beta=k^2(M\cdot\tau)$ where $M\in\op{GL}_2(\ZZ)$ is an
  integral matrix acting by a linear fractional transformation.
  Then $\alpha,\beta$ are both modular functions (with respect to some congruence subgroup) and
  hence algebraically related (as made explicit by \eqref{eq:mod5} when $M$ acts by multiplication with $5$).
  Moreover, because of \eqref{eq:Kktheta}, $F(\alpha)$ and $F(\beta)$ are both modular forms of
  weight $1$. Hence the quotient ${F(\alpha)}/{F(\beta)}$ is a modular function and again an algebraic
  function of $\alpha$ (as illustrated by \eqref{eq:mult}).
  Thus if $\tau$ is a quadratic irrationality, Proposition \ref{prop:FGb} holds for $\alpha,\beta$ as well.
\end{remark}

At this stage, we have expressed Sun's conjecture (\ref{eq:sunconj}) in the
form
\begin{equation}
  r_1 F (\alpha)^2 + r_2 F (\alpha) G ( \alpha ) = \frac{520}{\pi} \label{eq:sunconjFGa}
\end{equation}
where $r_1, r_2$ are explicit algebraic numbers.

\begin{proposition}
  \label{prop:FGE2}With $\alpha = k^2 (\tau)$, we have
  \[ F (\alpha) G (\alpha) = \frac{E_2 ( \tau ) + ( 2 \alpha - 1 ) F (\alpha)^2}{6 (1 - \alpha)} . \]
\end{proposition}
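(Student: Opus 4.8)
The plan is to translate the identity, which is phrased in terms of the hypergeometric functions $F$ and $G$ of the argument $\alpha$, into an identity among theta constants and then recognize it as a classical consequence of Jacobi's theory. Throughout I would work in a neighborhood of $i\infty$ (that is, for small $q$), where \eqref{eq:Kktheta} gives $F(\alpha) = \theta_3(\tau)^2$ with $\alpha = k^2(\tau)$; since both sides of the asserted identity are analytic there, establishing it on such a neighborhood suffices.

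First I would eliminate the $\alpha$-derivative hidden in $G$. Since $G(\alpha) = \alpha\,\frac{\md F}{\md\alpha}$, we have $F(\alpha) G(\alpha) = \alpha F \frac{\md F}{\md\alpha}$, and I would convert $\frac{\md}{\md\alpha}$ into the operator $D = \frac{1}{2\pi i}\frac{\md}{\md\tau}$ using the standard derivative formula for the elliptic modulus,
\[ D\alpha = \tfrac12\,\alpha(1-\alpha)\,F(\alpha)^2 \]
(see \cite{borwein-piagm}). Writing $\frac{\md F}{\md\alpha} = DF/D\alpha$ and using $F = \theta_3^2$, a direct simplification collapses the algebraic prefactors to give
\[ F(\alpha)G(\alpha) = \frac{2\,D\log F}{1-\alpha} = \frac{4\,D\log\theta_3}{1-\alpha}. \]
Hence the proposition becomes equivalent to the single relation $24\,D\log\theta_3 = E_2(\tau) + (2\alpha-1)F(\alpha)^2$.

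Next I would simplify the right-hand side using Jacobi's identity $\theta_3^4 = \theta_2^4 + \theta_4^4$ together with $\alpha = \theta_2^4/\theta_3^4$ and $F^2 = \theta_3^4$, which turns $(2\alpha-1)F^2$ into $\theta_2^4 - \theta_4^4$. The goal is thereby reduced to the classical theta-constant identity $24\,D\log\theta_3 = E_2 + \theta_2^4 - \theta_4^4$. To prove this I would set up the linear system in $a = 24\,D\log\theta_2$, $b = 24\,D\log\theta_3$, $c = 24\,D\log\theta_4$ coming from three standard logarithmic-derivative formulas: from $\theta_2\theta_3\theta_4 = 2\eta^3$ and $E_2 = 24\,D\eta/\eta$ one gets $a+b+c = 3E_2$; from $D\log k = \tfrac14\theta_4^4$ (with $k = \theta_2^2/\theta_3^2$) one gets $a - b = 3\theta_4^4$; and from $D\log k' = -\tfrac14\theta_2^4$ (with $k' = \theta_4^2/\theta_3^2$) one gets $c - b = -3\theta_2^4$. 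Solving for $b$ then yields $b = E_2 + \theta_2^4 - \theta_4^4$, as required.

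The routine parts are the algebraic simplification in the first step and the solution of the $3\times 3$ linear system; the only genuine input is the family of classical derivative formulas for $k$, $k'$ and $\eta$. The one point demanding care is the bookkeeping of the two nome conventions: the theta constants are naturally power series in $e^{\pi i\tau}$, whereas $D$ and $E_2$ are defined through $q = e^{2\pi i\tau}$, so the operator $\frac{1}{\pi i}\frac{\md}{\md\tau}$ that appears in the modulus formulas equals $2D$. Tracking these factors of two (and the signs in the formulas for $D\log k$ and $D\log k'$) correctly is precisely what makes the constants $24$, the coefficient $2\alpha-1$, and the denominator $6(1-\alpha)$ emerge exactly.
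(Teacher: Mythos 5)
Your proof is correct, but it takes a genuinely different route from the paper: there the proposition is disposed of in one line by citing Entry 17.9(iv) of Chapter 17 in Berndt's \emph{Ramanujan's Notebooks, Part III}, i.e.\ the identity is simply quoted as a known result of Ramanujan. You instead derive it from scratch, and every step checks out. The reduction $F(\alpha)G(\alpha) = \frac{2\,D\log F}{1-\alpha}$ via $D\alpha = \tfrac12\alpha(1-\alpha)F(\alpha)^2$ is right (with the paper's $D = \frac{1}{2\pi i}\frac{\md}{\md\tau}$ and half-integral nome conventions correctly accounted for), the conversion of $(2\alpha-1)F^2$ into $\theta_2^4-\theta_4^4$ via Jacobi's identity is right, and the $3\times 3$ system built from $\theta_2\theta_3\theta_4=2\eta^3$ and the logarithmic derivatives of $k$ and $k'$ does yield $24\,D\log\theta_3 = E_2+\theta_2^4-\theta_4^4$; a $q$-expansion check through order $q^{3/2}$ confirms the constants. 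What your approach buys is self-containedness — the only external inputs are the derivative formula for the modulus, Jacobi's quartic identity, and the eta-product for $\theta_2\theta_3\theta_4$, all genuinely classical — at the cost of about a page where the paper spends a sentence. One small remark: you implicitly take $E_2 = 24\,D\log\eta = 1 - 24\sum_{n\ge1}\frac{nq^n}{1-q^n}$; the paper's displayed $q$-expansion of $E_2$ has the opposite sign on the sum and is inconsistent with its own definition $E_2 = 24\,D\eta/\eta$, so your convention (the standard one, and the one needed for the transformation law \eqref{eq:E2law}) is the correct reading.
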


\begin{proof}
  This is {\cite[Entry 17.9(iv)]{berndtIII}} in mildly different notation.
\end{proof}

Applying Proposition \ref{prop:FGE2} with $\alpha$ as in (\ref{eq:ab}),
equation (\ref{eq:sunconjFGa}) becomes
\begin{equation}
  s_1 F (\alpha)^2 + 52 \sqrt{5} E_2 ( \tau_0 ) =
  \frac{520}{\pi} \label{eq:sunFE2}
\end{equation}
where $s_1 \approx 5.0538411 - 7.1194683 i$ is an algebraic number of degree
$8$. To be very explicit, $\frac{s_1}{52 \alpha}$ has minimal polynomial
\[ x^8 + 14197606 x^6 - 56569153 x^4 + 15962594 x^2 + 175561. \]
In the final step, we relate, in a standard way, the Eisenstein series $E_2
(\tau)$ at our specific point of interest to an algebraic
combination of $\frac{1}{\pi}$ and $\theta_3 (\tau)^4$. The fact
that this is possible whenever $\tau$ is a quadratic irrationality is what
lies behind the abundance of series for $\frac{1}{\pi}$.

\begin{proposition}
  With $\alpha = k^2 (\tau_0)$ and $\tau_0 = \frac{1}{2} +
  \frac{3}{10} \sqrt{- 5}$ as in Proposition \ref{prop:tau}, we have
  \begin{equation}
    E_2 (\tau_0) = \frac{2 \sqrt{5}}{\pi} + s_2 F \left( \alpha
    \right)^2 \label{eq:E2as1piF}
  \end{equation}
  where $s_2$ is an algebraic number explicited at the end of the proof.
\end{proposition}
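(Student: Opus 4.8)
The plan is to express the quasimodular Eisenstein series $E_2(\tau_0)$ at our CM point in terms of $1/\pi$ and the modular form $\theta_3(\tau_0)^4 = F(\alpha)^2$, exploiting the fact that $\tau_0$ is a quadratic irrationality. The standard mechanism is the Chowla--Selberg type phenomenon: although $E_2$ is only quasimodular, the nonholomorphic correction $E_2^*(\tau) = E_2(\tau) - \frac{3}{\pi \Im \tau}$ is a genuine (nonholomorphic) modular form of weight $2$, and at a CM point its value becomes an algebraic multiple of a weight-$2$ holomorphic object, i.e. of $\theta_3(\tau_0)^4$. The appearance of $1/\pi$ in \eqref{eq:E2as1piF} is precisely the $\frac{3}{\pi \Im \tau_0}$ correction term, and since $\Im \tau_0 = \frac{3}{10}\sqrt{5}$, the term $\frac{3}{\pi \Im \tau_0} = \frac{10}{\pi \sqrt 5} = \frac{2\sqrt 5}{\pi}$ matches the stated coefficient exactly. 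This is the conceptual heart of the argument and it explains the precise constant appearing in the proposition.

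First I would invoke the transformation behaviour of $E_2$ under the modular group, namely $E_2(-1/\tau) = \tau^2 E_2(\tau) + \frac{12\tau}{2\pi i}$, together with the analogous transformation under the relevant congruence subgroup fixing the point. The key is that $\tau_0$ satisfies a quadratic equation, so there is a matrix $M = \left(\begin{smallmatrix} a & b \\ c & d \end{smallmatrix}\right)$ in the relevant arithmetic group with $M\cdot\tau_0 = \tau_0$ but $M$ not the identity. Applying the weight-$2$ quasimodular transformation law of $E_2$ to this fixed-point relation yields a linear equation for $E_2(\tau_0)$: the genuinely modular part transforms with the automorphy factor $(c\tau_0+d)^2$, while the anomalous term contributes a multiple of $\frac{c}{c\tau_0+d}$. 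Solving this equation isolates $E_2(\tau_0)$ as an explicit combination of $\frac{1}{\pi}$ (coming from the anomaly) and a weight-$2$ holomorphic quantity, which by \eqref{eq:Kktheta} is $F(\alpha)^2$ up to an algebraic factor built from the values of $k(\tau_0)$, $k(5\tau_0)$ and the matrix entries.

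In practice I would combine the degree-$5$ relationship between $\tau_0$ and $5\tau_0$ already recorded in the excerpt with the transformations \eqref{eq:ktr} to produce the needed fixed-point relation: because $\tau_0 = \frac12 + \frac{3}{10}\sqrt{-5}$, the map $\tau \mapsto -1/(10\tau)$ sends $5\tau_0$ to a point equivalent to $\tau_0$ (this is the same involution underlying the alternate point $\tau_1 = -1/(10\tau_0)$ discussed in Remark \ref{rk:tau1fail}). Tracking how $E_2$ transforms under this Fricke-type involution of level $10$, and feeding in the already-computed algebraic values of $\alpha$ and $\beta$, gives a single linear relation among $E_2(\tau_0)$, $E_2(5\tau_0)$, $F(\alpha)^2$ and $1/\pi$. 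A second, independent relation comes from applying Proposition \ref{prop:FGE2} at both $\tau_0$ and $5\tau_0$ and using $F(\beta)=t\,F(\alpha)$ from Proposition \ref{prop:FGb}, which lets one eliminate $E_2(5\tau_0)$ and solve for $E_2(\tau_0)$ alone in the claimed form, reading off the algebraic number $s_2$.

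The main obstacle I anticipate is not the conceptual structure, which is classical, but the careful bookkeeping of the automorphy factors and branch choices when passing between $\tau_0$ and $5\tau_0$ under the level-$10$ involution: one must verify that the chosen matrix genuinely fixes the $\Gamma_0(10)$-orbit, keep consistent fourth- and sixth-root determinations when using the modular equation \eqref{eq:mod5} and the multiplier \eqref{eq:mult}, and ensure that the quasimodular anomaly is assigned the correct sign so that the coefficient of $1/\pi$ comes out as $+2\sqrt 5$ rather than its negative. The resulting constant $s_2$ will be an unwieldy algebraic number whose minimal polynomial and numerical approximation I would record explicitly, in the same spirit as $s_1$ earlier, rather than display in closed radical form.
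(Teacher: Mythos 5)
Your conceptual framing is attractive and partly correct: the nonholomorphic completion $E_2^*(\tau) = E_2(\tau) - \frac{3}{\pi \Im \tau}$ is modular of weight $2$, its value at a CM point is an algebraic multiple of a weight-$2$ holomorphic form such as $\theta_3(\tau)^4$, and $\frac{3}{\pi \Im \tau_0} = \frac{2\sqrt{5}}{\pi}$ does explain the constant in \eqref{eq:E2as1piF} --- a pleasant observation the paper never makes explicit. The concrete mechanism you propose for extracting the $1/\pi$ term, however, fails at this particular point. No nontrivial element of $\op{SL}_2(\ZZ)$ (or of a congruence subgroup) fixes $\tau_0$, since $\tau_0$ is not an elliptic point; the element of the CM order fixing $\tau_0$ has determinant greater than $1$, and for such matrices $E_2$ does not obey the simple weight-$2$ quasimodular cocycle --- one needs precisely the algebraicity of the multiplier of the second kind \eqref{eq:mult2}. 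Worse, your chosen level-$10$ Fricke involution does not close up: from $10\tau_0^2 - 10\tau_0 + 7 = 0$ one finds $-\frac{1}{10\tau_0} = \frac{\tau_0 - 1}{7}$, a point $7$-isogenous to $\tau_0$ but not $\op{SL}_2(\ZZ)$-equivalent to it (the required identity $5(c+2d)^2 + 9c^2 = 140$ has no integer solutions), and likewise $-\frac{1}{50\tau_0}$ is not equivalent to $\tau_0$. Hence no linear relation for $E_2(\tau_0)$ containing the quasimodular anomaly emerges from this involution. Your proposed ``second, independent relation'' (Proposition \ref{prop:FGE2} at $\tau_0$ and $5\tau_0$ combined with Proposition \ref{prop:FGb}) is purely algebraic and contains no $1/\pi$, so it cannot supply the missing transcendental input.

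The paper closes the loop elsewhere: at $\tau = i\sqrt{5}$ one has $-1/\tau = \tau/5$, so the transformation law \eqref{eq:E2law} together with the degree-$5$ multiplier of the second kind $R_5$ yields a genuinely self-referential system giving $E_2(i/\sqrt{5})$ as an explicit combination of $1/\pi$ and $\theta_3(i/\sqrt{5})^4$, with the known singular values $k(i\sqrt{5})$ and $k'(i\sqrt{5})$ feeding in; the evaluation is then transported to $\tau_0$ via the degree-$3$ and degree-$2$ multipliers, using $2\tau_0 = 1 + 3i/\sqrt{5}$. To repair your argument you must either anchor the computation at such a genuinely Fricke-fixed point and transport as the paper does, or prove the CM algebraicity of $E_2^*$ by other means \emph{and} still produce $s_2$ as an exact algebraic number, since the final verification $s_1 + 52\sqrt{5}\, s_2 = 0$ in Theorem \ref{thm:sunconj} requires its explicit value, not merely its existence.
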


\begin{proof}
  The main tool of this evaluation is the fact that Ramanujan's multiplier of
  the second kind, given by {\cite[Chapter 5]{borwein-piagm}}
  \begin{equation}
    R_p \left( l, k \right) \assign \frac{p E_2 (p\tau) - E_2
    (\tau)}{\theta_3^2 (p\tau) \theta_3^2 (\tau)}, \label{eq:mult2}
  \end{equation}
  is an algebraic function of $l \assign k (p\tau)$ and $k
  \assign k (\tau)$ (and hence an algebraic number when $\tau$ is
  a quadratic irrationality). For our specific value $\tau = \tau_0$ we will
  have use of the three instances
  \begin{align*}
    R_2 \left( l, k \right) & = l' + k,\\
    R_3 \left( l, k \right) & = 1 + k l + k' l',\\
    R_5 \left( l, k \right) & = \left( 3 + k l + k' l' \right) \sqrt{\frac{1
    + k l + k' l'}{2}} .
  \end{align*}
  By differentiating the transformation law for the Dedekind eta function, we
  also have
  \begin{equation}
    E_2 \left( - \frac{1}{\tau} \right) = \tau^2 E_2 (\tau) +
    \frac{6 \tau}{\pi i} . \label{eq:E2law}
  \end{equation}
  If $\tau = i \sqrt{p}$ then $- 1 / \tau = i / \sqrt{p} = \tau / p$. In that
  case (\ref{eq:mult2}) and (\ref{eq:E2law}) give two relations between $E_2
  (\tau)$ and $E_2 \left( \tau / p \right)$. Also note that, for
  such $\tau$, we have $l = k'$ and $l' = k$ by (\ref{eq:ktr}). In the case,
  $p = 5$ we thus obtain
  \begin{align}
    E_2 \left( i / \sqrt{5} \right) & = \frac{3 \sqrt{5}}{\pi} - \frac{1}{2
    \sqrt{5}} \theta_3 \left( i / \sqrt{5} \right)^4 \left( 3 + 2 k k' \right)
    \sqrt{\frac{1 + 2 k k'}{2}} \nonumber\\
    & = \frac{3 \sqrt{5}}{\pi} - \frac{\sqrt{1 + \sqrt{5}}}{\sqrt{10}}
    \theta_3 \left( i / \sqrt{5} \right)^4 .  \label{eq:E2i5}
  \end{align}
  where in the second step we used the singular values {\cite[Chapter
  4]{borwein-piagm}}
  \begin{align*}
    k \left( i \sqrt{5} \right) & = \frac{\sqrt{\sqrt{5} - 1} - \sqrt{3 -
    \sqrt{5}}}{2},\\
    k' \left( i \sqrt{5} \right) & = \frac{\sqrt{\sqrt{5} - 1} + \sqrt{3 -
    \sqrt{5}}}{2} .
  \end{align*}
  Using the modular equations of degree $3$ (of both first and second kind),
  we now find an algebraic $r_1$ such that
  \[ 3 E_2 \left( 3 i / \sqrt{5} \right) - E_2 \left( i / \sqrt{5} \right) =
     r_1 \theta_3 \left( 3 i / \sqrt{5} \right)^4 \]
  and hence, by (\ref{eq:E2i5}), an algebraic number $r_2$ such that
  \[ E_2 \left( 1 + 3 i / \sqrt{5} \right) = E_2 \left( 3 i / \sqrt{5} \right)
     = \frac{\sqrt{5}}{\pi} + r_2 \theta_3 \left( 3 i / \sqrt{5} \right)^4 .
  \]
  A further and likewise application of the modular equations of degree $2$
  yields
  \[ E_2 \left( \frac{1}{2} + \frac{3}{10} \sqrt{- 5} \right) = \frac{2
     \sqrt{5}}{\pi} + s_2 \theta_3 \left( \frac{1}{2} + \frac{3}{10} \sqrt{-
     5} \right)^4 \]
  where $s_2 \approx - 0.043464355 + 0.061229289 i$ has minimal polynomial
  \begin{align*}
    & 625 x^8 - 47597450000 x^7 + 64879599000 x^6 + 34024656000 x^5\\
    & - 58306698000 x^4 + 168524800 x^3 + 8089408640 x^2 + 722959360 x +
    44943616.
  \end{align*}
  In light of (\ref{eq:Kktheta}), this is the claimed (\ref{eq:E2as1piF}).
\end{proof}

\begin{theorem}
  \label{thm:sunconj}Sun's conjecture (\ref{eq:sunconj}) holds true.
\end{theorem}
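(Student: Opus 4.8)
The plan is simply to assemble the chain of equivalences already established. By the sequence of reductions culminating in Propositions \ref{prop:tauA}, \ref{prop:Ax}, \ref{prop:FGb} and \ref{prop:FGE2}, Sun's conjecture (\ref{eq:sunconj}) has been rewritten successively as (\ref{eq:sunconjA}), then (\ref{eq:sunconjFGab}), then (\ref{eq:sunconjFGa}), and finally as the single equation (\ref{eq:sunFE2}), in which the only transcendental quantities left are $1/\pi$, the value $F(\alpha)^2$, and the Eisenstein value $E_2(\tau_0)$. It therefore suffices to verify (\ref{eq:sunFE2}), and the last ingredient needed for this — the expression (\ref{eq:E2as1piF}) of $E_2(\tau_0)$ in terms of $1/\pi$ and $F(\alpha)^2$ — has just been provided.

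First I would substitute (\ref{eq:E2as1piF}) into (\ref{eq:sunFE2}), obtaining
\[
  s_1 F(\alpha)^2 + 52\sqrt{5}\left( \frac{2\sqrt{5}}{\pi} + s_2 F(\alpha)^2 \right) = \frac{520}{\pi}.
\]
The crucial structural feature is that the purely transcendental contribution $52\sqrt{5}\cdot\frac{2\sqrt{5}}{\pi} = \frac{520}{\pi}$ already reproduces the right-hand side exactly. Subtracting it, the identity collapses to the statement that the coefficient of $F(\alpha)^2$ vanishes, i.e.
\[
  s_1 + 52\sqrt{5}\, s_2 = 0.
\]
Since $F(\alpha)\neq 0$ — indeed, by (\ref{eq:Kktheta}) it equals $\theta_3(\tau_0)^2$, which does not vanish in the upper half-plane — this vanishing is both necessary and sufficient for (\ref{eq:sunFE2}).

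The entire argument is thus reduced to a single algebraic identity among explicit numbers, and this is where I expect the only real work to lie. Both $s_1$ and $s_2$ are algebraic of degree at most $8$, with the minimal polynomials recorded above, so $s_1 + 52\sqrt{5}\,s_2 = 0$ can be confirmed by exact symbolic manipulation in any computer algebra system, working in the number field generated by $\sqrt{5}$ and the relevant singular values. As a sanity check one may note numerically that $-52\sqrt{5}\,s_2 \approx 5.054 - 7.119\, i$, in agreement with the recorded $s_1 \approx 5.0538411 - 7.1194683\, i$. Once this last verification is in place, (\ref{eq:sunFE2}) holds, and tracing the equivalences back up the chain shows that Sun's conjecture (\ref{eq:sunconj}) holds true.
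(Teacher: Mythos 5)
Your proposal is correct and follows exactly the paper's own argument: substitute \eqref{eq:E2as1piF} into \eqref{eq:sunFE2}, observe that the $\frac{520}{\pi}$ terms match, and reduce everything to the algebraic identity $s_1 + 52\sqrt{5}\, s_2 = 0$, verified symbolically. The extra remarks (non-vanishing of $F(\alpha)$, the numerical sanity check) are sensible but do not change the route.
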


\begin{proof}
  We combine the left-hand side of (\ref{eq:sunFE2}) with (\ref{eq:E2as1piF})
  to get
  \[ s_1 F (\alpha)^2 + 52 \sqrt{5} E_2 (\tau_0) =
     \left( s_1 + 52 \sqrt{5} s_2 \right) F (\alpha)^2 +
     \frac{520}{\pi} . \]
  The claim follows by verifying that $s_1 + 52 \sqrt{5} s_2 = 0$.
\end{proof}

\section{Further series of the same form}\label{sec:other}

The other conjectures in {\cite{sun-520}} of the same form as
(\ref{eq:sunconj}) can be proved analogously to (\ref{eq:sunconj}). We
therefore only provide the counterpart of Proposition \ref{prop:tau}.

\begin{proposition}
  \label{prop:taus}Let $x, y, \tau, p$ be as in the table below. Then
  \[ X = k' (\tau), \hspace{2em} Y = k' (p\tau), \]
  satisfy the two relations (\ref{eq:relXYxy}). Moreover, equation
  (\ref{eq:Ahyp}) holds.

  In the cases, where $p$ occurs starred, $Y=k'(p\tau+1)=1/k'(p\tau)$ has to be taken instead.
  
  \begin{table}[h]
    \begin{center}
    \begin{tabular}{|c|c|c|c|c|c|}
      \hline
      % $x$ & $y$ & $\tau$ & $p$ & {\cite{sun-520}} & \multicolumn{2}{|c|}{Remark \ref{rk:Tbc}} \\
      $x$ & $y$ & $\tau$ & $p$ & {\cite{sun-520}} & {Remark \ref{rk:Tbc}} \\
      \hline
      $\frac{i}{240}$ & $6 i$ & $i \sqrt{\frac{3}{10}}$ & $5$ & (3.11) & (IV5) \\
      % & $\frac{2}{3}$\\
      \hline
      $\frac{i}{289}$ & $14 i$ & $i \sqrt{\frac{1}{7}}$ & $7$ & (3.12) & \cite[(33)]{wz-legendre} \\
      % & \\
      \hline
      $\frac{i}{2800}$ & $14 i$ & $i \sqrt{\frac{7}{10}}$ & $5$ & (3.13) & (IV11) \\
      % & $\frac{2}{7}$\\
      \hline
      $\frac{i}{576}$ & $21 i$ & $i \sqrt{\frac{3}{14}}$ & $7$ & (3.14) & (IV9) \\
      % & $\frac{7}{9}$\\
      \hline
      $\frac{i}{46800}$ & $36 i$ & $i \sqrt{\frac{13}{10}}$ & $5$ & (3.15) & (IV13) \\
      % & $\frac{1}{9}$\\
      \hline
      $\frac{i}{2304}$ & $45 i$ & $i \sqrt{\frac{5}{14}}$ & $7$ & (3.16) & (IV10) \\
      %& $\frac{1}{9}$\\
      \hline
      $\frac{i}{439280}$ & $76 i$ & $i \sqrt{\frac{19}{10}}$ & $5$ & (3.17) & (IV17) \\
      % & $\frac{1}{19}$\\
      \hline
      $-\frac{i}{29584}$ & $175 i$ & $\frac{1}{2} + \frac{i}{2}\sqrt{\frac{19}{7}}$ & $7$ & (3.18) & (IV21) \\
      % & \\
      \hline
      $\frac{i}{5616}$ & $300 i$ & $i \sqrt{\frac{3}{26}}$ & $13$ & (3.19) & (IV14) \\
      % & $\frac{25}{27}$\\
      \hline
      $\frac{i}{28880}$ & $1156 i$ & $i \sqrt{\frac{5}{26}}$ & $13$ & (3.20) & (IV16) \\
      % & $\frac{289}{361}$\\
      \hline
      $\frac{i}{20400}$ & $1176 i$ & $i \sqrt{\frac{3}{34}}$ & $17$ & (3.21) & (IV15) \\
      % & $\frac{49}{51}$\\
      \hline
      $\frac{i}{243360}$ & $12321 i$ & $i \sqrt{\frac{5}{38}}$ & $19$ & (3.22) & (IV18) \\
      % & $\frac{1369}{1521}$\\
      \hline
      $\frac{1}{48}$ & $\frac{15}{16}$ & $\frac12 + \frac{i}{2} \sqrt{\frac{5}{3}}$ & $3^\star$ & (3.23) & (IV1) \\
      % & $\frac{3}{4}$\\
      \hline
      $\frac{1}{480}$ & $8$ & $\frac{1}{2} + \frac{3 i}{2} \sqrt{\frac{1}{5}}$ & $5$ & (3.24) & (IV2) \\
      % & $\frac{8}{15}$\\
      \hline
      $\frac{1}{5760}$ & $18$ & $\frac{1}{2} + \frac{i}{2} \sqrt{\frac{17}{5}}$ & $5$ & (3.25) & (IV3) \\
      % & $\frac{9}{40}$\\
      \hline
      $-\frac{1}{48}$ & $\frac98$ & $i \sqrt{\frac{1}{3}}$ & $3^\star$ & (3.26) & (IV19) \\
      % & \\
      \hline
      $-\frac{1}{288}$ & $\frac{225}{16}$ & $i \sqrt{\frac{1}{7}}$ & $7^\star$ & (3.27) & (IV20) \\
      % & \\
      \hline
    \end{tabular}
    \end{center}
    \caption{\label{tbl:taus}The modular parametrization for the additional entries from \cite{sun-520}}
  \end{table}
\end{proposition}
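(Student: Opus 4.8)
The plan is to reproduce, row by row of Table~\ref{tbl:taus}, the argument used to prove Proposition~\ref{prop:tau}, since the opening of this section already grants that the remaining steps of the proof of Sun's identity carry over. First I would record that $\tau$, $p\tau$, and (in the starred rows) $p\tau+1$ are quadratic irrationalities in the upper half-plane, so that $X=k'(\tau)$ and $Y=k'(p\tau)$ are values of modular functions at CM points; in the starred rows one replaces $Y=k'(p\tau)$ by $Y=k'(p\tau+1)=1/k'(p\tau)$, using the transformation $k'(\tau+1)=1/k'(\tau)$ from \eqref{eq:ktr}. By the standard principles invoked for Proposition~\ref{prop:tau} (see \cite{coxCM}, \cite{borwein-piagm}), each such value is an effectively computable algebraic number, pinned down by its minimal polynomial together with a sufficiently accurate numerical approximation; if fully explicit radicals are wanted, one may instead follow Remark~\ref{rk:XYalg} and evaluate $k'(\tau)$, $k'(p\tau)$ from tabulated singular values $k_n$ and class invariants $G_n$.

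Next I would verify the two relations \eqref{eq:relXYxy} for each row. Clearing denominators turns them into polynomial identities in the algebraic numbers $X$, $Y$ and the prescribed values $x$, $y$ (rational or purely imaginary in each row), so this is a finite algebraic verification carried out exactly as in Proposition~\ref{prop:tau}. To keep the bookkeeping under control I would compute the common minimal polynomial of $X$ and $Y$ by a resultant and use the symmetries of Remark~\ref{rk:XYsym} to match each root to the intended quadratic irrationality via the inversion $\tau=iK(k')/K(k)$. This matching is essential rather than cosmetic: as Remark~\ref{rk:tau1fail} illustrates in the case $p=5$, a companion point related to $\tau$ through \eqref{eq:mod2k} and the symmetries of Remark~\ref{rk:XYsym} also satisfies \eqref{eq:relXYxy}, yet produces values of $X$, $Y$ for which \eqref{eq:Ahyp} is false.

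The substantive part, and the step I expect to be the main obstacle, is to establish that \eqref{eq:Ahyp} genuinely holds and not merely that \eqref{eq:relXYxy} is satisfied. For those rows in which $X$ and $Y$ already lie in the neighbourhood of $1$ demanded by Theorem~\ref{thm:Ahyp}, the identity \eqref{eq:Ahyp} is immediate. For the remaining rows (including $x=1/480$, $y=8$ itself) I would run the analytic-continuation argument of Proposition~\ref{prop:tauA}: deform $X_t=(1-t)X+t$ and $Y_t=(1-t)Y+t$ from a value $t_0$ close to $1$, where Theorem~\ref{thm:Ahyp} applies, down to $t=0$, and certify through the convergence criterion \eqref{eq:convratio} that the series \eqref{eq:AxyP} for $A(x_t,y_t)$ converges absolutely along the entire path, so that $1-X_t^2$ and $1-Y_t^2$ avoid the branch cut of $F$ and no monodromy of $A$ is picked up. The delicate point is that the governing bounds \eqref{eq:estA}--\eqref{eq:estB} depend on the specific numerical values of $X$ and $Y$, so they must be re-derived entry by entry; the failure mode recorded in Remark~\ref{rk:tau1fail}, where the path leaves and re-enters the domain of convergence, is precisely what these per-row estimates are designed to rule out.
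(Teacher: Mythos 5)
Your proposal matches the paper's (largely implicit) argument: the paper gives no separate proof of Proposition~\ref{prop:taus}, stating only that the entries are handled ``analogously'' to Proposition~\ref{prop:tau} (algebraic verification of \eqref{eq:relXYxy} at CM points) together with the analytic-continuation argument of Proposition~\ref{prop:tauA} to secure \eqref{eq:Ahyp}, which is exactly the plan you lay out, including the correct handling of the starred rows via $k'(\tau+1)=1/k'(\tau)$ and the caution about the failure mode of Remark~\ref{rk:tau1fail}. This is the same approach; no substantive difference.
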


\begin{theorem}
  Identities (3.11) to (3.27) in {\cite{sun-520}} hold.
\end{theorem}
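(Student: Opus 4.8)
The plan is to run, for each of the seventeen identities (3.11)--(3.27), exactly the three-step argument that established Sun's flagship identity \eqref{eq:sunconj} in Theorem \ref{thm:sunconj}. Proposition \ref{prop:taus} already supplies the crucial input of Step~II: for each row of Table \ref{tbl:taus} it records a quadratic irrationality $\tau$ and a prime $p$ (possibly starred) so that $X = k'(\tau)$ and $Y = k'(p\tau)$ (or $Y = k'(p\tau+1) = 1/k'(p\tau)$ in the starred cases) solve the two relations \eqref{eq:relXYxy} with the prescribed $x, y$. With this in hand, Theorem \ref{thm:Ahyp} expresses the relevant double series $A(x,y)$ as the product of hypergeometric functions in \eqref{eq:Ahyp}, provided one can first legitimize that formula at the specific (non-generic) values of $X$ and $Y$.

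First I would secure the hypergeometric representation \eqref{eq:Ahyp} at each tabulated point by repeating the homotopy argument of Proposition \ref{prop:tauA}. Namely, I would interpolate $X_t = (1-t)X + t$ and $Y_t = (1-t)Y + t$, and verify via the convergence criterion \eqref{eq:convratio} of Proposition \ref{prop:conv} that the Legendre series \eqref{eq:AxyP} for $A(x_t, y_t)$ stays absolutely convergent for all $t \in [0, t_0]$, so that \eqref{eq:Ahyp}, valid near $X_t = Y_t = 1$, continues analytically down to $t = 0$. The starred entries require care here: as Remark \ref{rk:tau1fail} warns, the wrong choice of modular representative can push $x_t, y_t$ out of and back into the domain of convergence, invoking the monodromy of $A$. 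One must therefore select the representative (the starred $Y = 1/k'(p\tau)$) that keeps $X, Y$ in a neighborhood of $1$ and the whole homotopy inside the convergence region.

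Next I would carry out Step~III for each entry. Proposition \ref{prop:Ax} differentiates \eqref{eq:Ahyp} to produce $dA/dx$ as an algebraic combination of $F$ and $G$ at $\alpha = 1 - X^2$ and $\beta = 1 - Y^2$; Proposition \ref{prop:FGb}, which by Remark \ref{rk:FGb} applies verbatim whenever $\tau$ is a quadratic irrationality and $\beta = k^2(p\tau)$ (using the modular equation of degree $p$, e.g.\ \eqref{eq:mod5} for $p = 5$), eliminates $F(\beta), G(\beta)$ in favor of $F(\alpha), G(\alpha)$; and Proposition \ref{prop:FGE2} then collapses the combination to the form $s_1 F(\alpha)^2 + c\, E_2(\tau)$ analogous to \eqref{eq:sunFE2}. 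Finally, as in \eqref{eq:E2as1piF}, I would evaluate $E_2(\tau)$ at the quadratic irrationality in terms of $1/\pi$ and $F(\alpha)^2 = \theta_3(\tau)^4$ using Ramanujan's multiplier of the second kind \eqref{eq:mult2}, reducing each identity to the verification that an explicit algebraic coefficient vanishes.

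The main obstacle is this last step, the evaluation of $E_2(\tau)$, which must be redone for each prime $p \in \{3, 5, 7, 13, 17, 19\}$ appearing in Table \ref{tbl:taus}. Unlike the earlier steps, which are uniform, it requires for each $p$ an explicit formula for $R_p(l, k)$ together with a chain of modular equations (of both the first and second kind) in the auxiliary degrees needed to descend from a point of the form $i\sqrt{n}$ --- where the singular values $k_n, k_n'$ and class invariants $G_n$ are tabulated in \cite[Chapter 34]{berndtV} --- to the actual tabulated $\tau$. Assembling these chains, and carrying along the concomitant unwieldy algebraic numbers (of degree up to $16$), is where the genuine per-case labor lies; the concluding vanishing check is then a routine, if heavy, computer-algebra verification.
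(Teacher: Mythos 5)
Your proposal is correct and follows essentially the same route as the paper, which likewise proves these identities by rerunning the three-step argument of Theorem \ref{thm:sunconj} with Proposition \ref{prop:taus} supplying the modular parametrization in place of Proposition \ref{prop:tau}, and which handles the starred cases via the generalization of Proposition \ref{prop:FGb} noted in Remark \ref{rk:FGb}. Your elaboration of where the per-case work lies (the homotopy/convergence check and the $E_2(\tau)$ evaluations for each degree $p$) is a faithful unpacking of what the paper compresses into the word ``analogous.''
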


\begin{proof}
  Using Proposition \ref{prop:taus} in place of Proposition \ref{prop:tau},
  the proofs are analogous to the proof of (\ref{eq:sunconj}) which was
  completed in Theorem \ref{thm:sunconj}. (The starred cases need some
  slight adjustments to Proposition \ref{prop:FGb} which continues to hold
  in light of Remark \ref{rk:FGb}.)
\end{proof}

\begin{remark}
  \label{rk:Tbc}Let $T_k (b,c)$ denote, as in {\cite{sun-520}},
  the coefficient of $x^k$ in the expansion of $\left( x^2 + b x + c
  \right)^k$. In terms of the Legendre polynomials, these quantities can be
  expressed as {\cite{sun-143n}}
  \[ T_k (b,c) = \left( b^2 - 4 c \right)^{k / 2} P_k \left(
     \frac{b}{\sqrt{b^2 - 4 c}} \right) . \]
  Using (\ref{eq:AxyP}) we therefore have
  \[ A (x,y) = \sum_{k = 0}^{\infty} \left( - x^2 \right)^k 
     \binom{2 k}{k}^2 T_{2 k} \left( \sqrt{4 + \frac{y}{x}}, 1 \right) . \]
  For instance,
  \[ A \left( \frac{1}{480}, 8 \right) = \sum_{k = 0}^{\infty} \frac{1}{\left(
     - 480^2 \right)^k}  \binom{2 k}{k}^2 T_{2 k} \left( 62, 1 \right) \]
  which we notice as a building block for entry (IV2) in {\cite{sun-520,sun-143n}},
  \begin{equation}
    \sum_{k = 0}^{\infty} \frac{340 k + 59}{\left( - 480^2 \right)^k} 
    \binom{2 k}{k}^2 T_{2 k} \left( 62, 1 \right) = \frac{120}{\pi},
    \label{eq:sunIV2}
  \end{equation}
  thus connecting entries (3.24) and (IV2). Similar relations exist for
  all of the cases considered in Proposition \ref{prop:taus} and for
  each case we have recorded the corresponding entry of {\cite{sun-520}} in
  Table \ref{tbl:taus} (in the second case the sum was discovered in \cite{wz-legendre}).
  We remark that the cases in group (IV) not thus connected by Table \ref{tbl:taus}
  correspond to series arising from \eqref{eq:Fdef} where $y=i$.
  % In each case the relevant series converges
  % geometrically with a ratio $\rho$ which, up to an obvious sign, turns
  % out to be a rational square (this can be obtained from Remark
  % \ref{rk:conv}). We have recorded $\sqrt{|\rho|}$ in the final column of Table
  % \ref{tbl:taus}. For instance, as noted before, $\rho = -\left( \frac{8}{15}
  % \right)^2$ for (\ref{eq:sunIV2}).
  
  We remark that the equivalence of (\ref{eq:sunIV2}) and (\ref{eq:sunconj})
  relies on
  \begin{equation}\label{eq:ADrel}
    2 A \left( \frac{1}{480}, 8 \right) - 28 \left( \theta_x A \right) \left(
    \frac{1}{480}, 8 \right) + 65 \left( \theta_y A \right) \left(
    \frac{1}{480}, 8 \right) = 0
  \end{equation}
  where $\theta_z = z \frac{\md}{\md z}$ is the Euler differential
  operator. The truth of (\ref{eq:ADrel}) follows from the truth of
  (\ref{eq:sunconj}) and (\ref{eq:sunIV2}), which was proved in
  {\cite{wz-legendre}}.
  In fact, as pointed out by James Wan, \eqref{eq:ADrel} can be obtained by
  combining \eqref{eq:sunIV2} and its ``companion series'', see \cite[Section
  7]{cwz-legendre} and \cite[Remark 2]{wz-legendre}.  Underlying the concept of
  a companion series is that when one has two parameters, such as $x,y$, one
  can differentiate with respect to either of them. For our purposes only the
  derivative with respect to $x$ was considered but, clearly, Proposition
  \ref{prop:Ax} works equally well for $\frac{\md A}{\md y}$.  Pursuing this,
  one obtains another series for $1/\pi$, a companion of the original series.
  
  % While it would be nice to establish (\ref{eq:ADrel})
  % directly without modular parametrization as used here and in \cite{cwz-legendre,wz-legendre},
  % this seems to be no easier than proving \eqref{eq:sunconj}.

  %TODO: Equation \eqref{eq:ADrel} has its origin with functional equations as in [Wadim, Leg^2 - Thm 2].
\end{remark}

\vfill

\paragraph{Acknowledgements}
We are very grateful to James Wan and Wadim Zudilin for valuable comments and
encouragement. Their prior work was crucial to us. We also thank Zhi-Wei Sun
for sharing an updated list of his conjectural series as well as helpful
suggestions.

\end{document}